\newcommand{\Hz}[1]{\mathcal{H}(#1)}
\newcommand{\pHz}[2]{\mathcal{H}^{#1}(#2)}
\newcommand{\HSt}[1]{\tilde{\gamma}_{#1}}
\newcommand{\HarmonicStieltjesconstants}[1]{\tilde{\gamma}_{#1}}
\newcommand{\zHSt}[1]{{}_2\tilde{\gamma}_{#1}}
\newcommand{\secondharmonicStieltjesconstants}[1]{{}_2\tilde{\gamma}_{#1}}
\newcommand{\thirdharmonicStieltjescosntants}[1]{{}_3\tilde{\gamma}_{#1}}
\newcommand{\fourthharmonicStieltjescosntants}[1]{{}_4\tilde{\gamma}_{#1}}
\newcommand{\anythharmonicStieltjescosntants}[2]{{}_{#1}\tilde{\gamma}_{#2}}
\newcommand{\powerharmonicetafunction}[2]{{\mathcal{J}}^{#1}(#2)}
\newcommand{\powerhl}[3]{\mathrm{Hl}_{#1}^{#2}(#3)}
\newcommand{\harmonicpolylogarithm}[2]{\mathrm{Hl}_{#1}\left(#2\right)}
\newtheorem{theorem}{Theorem}
\newtheorem{lemma}[theorem]{Lemma}%
\newtheorem{remark}{Remark}%
\newtheorem{corollary}[theorem]{Corollary}
\newtheorem{definition}{Definition}%
\author[1]{Lo Ho Tin}
\begin{document}

\title{On sums involving powers of harmonic numbers}
\maketitle

\abstract{In this paper, we study a Dirichlet series generated by powers of harmonic numbers. As an application of these functions, we derive certain series involving harmonic numbers. We also study the analytic properties of these Dirichlet series such as values negative integers and behavior at poles. In particular, objects similar to the Stieltjes constants are discussed. Asymptotics of the sums involving harmonic numbers are also studied. From these results I showed a connection between its analytic properties and a possible route to showing the irrationality of the Euler-Mascheroni constant.}

\section{Motivation}
The functions \begin{equation}\label{motivation}\sum_{n=1}^{\infty}\frac{H_n^{m}}{n^s}\quad ,\quad \sum_{n=1}^{\infty}\frac{H_n^{m}}{n^s}(-1)^{n+1}\end{equation}
where $H_k := 1 + 1/2 + \cdots + 1/k$ denotes the $k$th harmonic number, is not extensively studied for $m>1$ at the negative integers of $s$, Indeed there are study of closed form of $s$ at positive integers, e.g. \cite{bailey1994experimental}. In this paper, I will provide a comprehensive study of the functions in (\ref{motivation}). This paper will mostly focus on the negative integers of $s$ and positive integers $m$. 
\par
To demonstrate the application of the results in this paper, I will provide a solution to the following sums:
$$\sum_{n=1}^{\infty}\frac{H_n^m-(\log n+\gamma)^m}{n} \quad ,\quad \sum_{n=1}^{\infty}(-1)^{n+1}\left(H_n^m-(\log n+\gamma)^m\right)$$ That is, (\ref{Solved sum}) and (\ref{Alternation version of unsolved problem}). They will be appearing in the first few sections.
\par Let
$$\zeta(s) := \sum_{n=1}^\infty \frac{1}{n^s}, \quad (\Re(s) > 1) \quad ,\eta(s)=\sum_{n=1}^{\infty}\frac{(-1)^{n+1}}{n^s},\quad (\Re(s)>0)$$
denote the classical Riemann zeta function. The properties of the Riemann zeta function have been studied extensively in the past century due to its analytic properties. For example, the Riemann zeta function has been analytically continued to a meromorphic function in the complex plane with a simple pole at $s = 1$ and has the following Laurent series expansion:
\begin{equation}\label{Laurent of zeta}
    \zeta(s)=\frac{1}{s-1}+\gamma+\sum_{n=1}^{\infty} \frac{(-1)^n}{n!}\gamma_n (s-1)^n
\end{equation}
where $$\gamma_n := \lim_{m \to \infty} \left\{\sum_{k=1}^m \frac{\log^n k}{k} - \frac{\log^{n+1} m}{n+1}\right\}$$ defines the Stieltjes constants.

During correspondences with Goldbach, Euler studied sums of the form
$$\sum_{k=1}^\infty \frac{1}{k^n}\left(1 + \frac{1}{2^m} + \cdots + \frac{1}{k^m}\right), \quad (m \in \mathbb{N}, n \in \mathbb{N} \setminus\{1\}).$$
These types of sums are known as Euler sums in deference to Euler. Euler was able to derive a closed form in terms of zeta values for the case when $m=1$:
$$\sum_{k=1}^\infty \frac{H_k}{k^n} = \left(\frac{n}{2} + 1\right)\zeta(n+1) - \frac{1}{2}\sum_{k=1}^{n-2} \zeta(n-k)\zeta(k+1), \quad (n \in \mathbb{N}\setminus\{1\})$$
 \cite{apostol1984dirichlet} later studied the analytic properties of the following Dirichlet series:
$$\mathcal{H}(s) = \sum_{n=1}^\infty \frac{H_n}{n^s}, \quad (\Re(s) > 1).$$
This was also studied by \cite{matsuoka1982values} with a similar technique, namely, utilizing the Euler Macluarin summation formula. The function $\mathcal{H}$ defines the harmonic zeta function. As its name implies, it shares similarities to the Riemann zeta function. In this paper, we will consider a generalization of this function
 The following functions will be our main objects of study.
\begin{definition}
    Define the functions 
    \[ \pHz{m}{s}:=\sum_{n=1}^{\infty} \frac{H^m_n}{n^s}\quad, \quad 
          \mathcal{J}(s)=\sum_{n=1}^{\infty} \frac{(-1)^{n+1}}{n^s}H_n \quad ,\quad\powerharmonicetafunction{m}{s}=\sum_{n=1}^{\infty} \frac{(-1)^{n+1}}{n^s}H_n^m 
    \]
\end{definition} The notation omits $m$ when $m=1$. \par
Each section will be briefly summarized below. Section 2-4 develops the properties of $\mathcal{H}^m$ and applies it to solve (\ref{Solved sum}). Section $5$ develops alternating sums $\mathcal{J}^m$, finds their values at $0$, and use it so solve (\ref{Alternation version of unsolved problem}). The later sections are development motivated by the mentioned unsolved problems. Section $6$ provides a way to find the values of $\mathcal{J}$ and $\mathcal{J}^2$ at negative integers that could be represented explicitly, and states that $\mathcal{J}^m$ at negative integers are able to be written in terms of values of $\mathcal{J}^a$ where $a<m$, up to a certain number of the argument. Section $7$ derives a $q$ analog and shows that such investigation of $q$-analog could be linked to number theoretic studies. In Section $8$, we show a stunning relation among Dirichlet series and use it to develop a relation between $\mathrm{Hl}_{-k}^m(x)$ and $\mathrm{Hl}_{-k}^a(x)$ where $a<m$ (See definition \ref{harmonic polylogarithm}). In section $9$, asymptotic behaviors of harmonic sums are given. Section 10 studies the poles function $\mathcal{H}^m$ and find a relation among its coefficients.

To prove the later results we may use the asymptotic representation of $H_n$ shown below, as shown in \cite{apostol1984dirichlet}
\begin{equation}\label{Asymp. Harmonic}H_n=\log(n)+\gamma +\frac{1}{2n}-\sum_{a=1}^k \frac{B_{2a}}{2an^{2a}} +\int_n^{\infty} \frac{\tilde B_{2k}(x)}{x^{2k+1}}\, dx\end{equation}
where $B_n$ are the Bernoulli numbers and $\tilde B_n(x)$ are the periodic Bernoulli polynomials \cite{lehmer1988new}.

One could use it to derive:
\begin{equation}\label{EM of Harmonic zeta function v1}
    \Hz{s}=-\zeta'(s)+\gamma \zeta(s)+\frac{1}{2}\zeta({s+1})-\sum_{a=1}^k \frac{B_{2a}}{2a} \zeta({2a+s}) +\sum_{n\geq 1}\frac{1}{n^s}\int_n^{\infty} \frac{\tilde B_{2k}(x)}{x^{2k+1}}\, dx
\end{equation}
It was studied from \cite{candelpergher2020laurent} that the function $\mathcal{H}(s)$ have a double pole at $s=1$. With that being known, its Laurent series was defined. The \textit{harmonic Stieltjes constants} $\tilde{\gamma}_k$ were defined as the following:
\begin{equation}\label{definition of harmonic Steiltjes}\mathcal{H}(s)=\frac{1}{(s-1)^2}+\frac{\gamma}{s-1}+\sum_{k=0}^{\infty} \frac{(-1)^k}{k!}\tilde{\gamma}_k (s-1)^k\end{equation}
The symbol $\tilde{\gamma}_k$ is of reference from the paper \cite{candelpergher2020laurent}.

The positive values of $\mathcal{H}^m(s)$ were studied from \cite{flajolet1998euler} and \cite{bailey1994experimental}, and the positive values of $\mathcal{J}^m(s)$ are studied in \cite{flajolet1998euler}. In this paper, we pay attention to the divergent values of $\mathcal{H}^m(s)$ instead and infer results like in the coefficients of the Laurent expansion (\ref{Laurent of zeta}). More of the behavior of poles are discussed in section 10.
\section{A particular case}

To give an example of the main idea of the following sections, we consider the case where $m=2$, consider (\ref{Asymp. Harmonic}). Multiplying by $\frac{H_n}{n^s}$ and summing both sides gives
\begin{lemma} for $\Re (s) >1- k$
 
\begin{equation}\label{EM of H2 v1}
    \pHz{2}{s}=-\mathcal{H}'(s)+\gamma \mathcal{H}(s)+\frac{1}{2}\Hz{s+1}-\sum_{a=1}^k \frac{B_{2a}}{2a} \Hz{2a+s} +\sum_{n\geq 1}\frac{H_n}{n^s}\int_n^{\infty} \frac{\tilde B_{2k}(x)}{x^{2k+1}}\, dx
\end{equation}
   
\end{lemma}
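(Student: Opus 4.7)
The strategy is the one indicated in the surrounding text: starting from the Euler--Maclaurin asymptotic (\ref{Asymp. Harmonic}) for $H_n$, multiply both sides by $H_n/n^s$ and sum over $n\geq 1$, then recognize each resulting series in terms of $\mathcal{H}$ and its derivative.

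I would first work in the half-plane $\Re(s)>1$, where $\pHz{2}{s}=\sum_{n\geq 1}H_n^2/n^s$ converges absolutely and every Dirichlet series appearing on the right-hand side converges as well. Distributing gives
\[
\sum_{n\geq 1}\frac{H_n^2}{n^s} = \sum_{n\geq 1}\frac{H_n\log n}{n^s} + \gamma\,\Hz{s} + \frac{1}{2}\Hz{s+1} - \sum_{a=1}^{k}\frac{B_{2a}}{2a}\Hz{2a+s} + \sum_{n\geq 1}\frac{H_n}{n^s}\int_n^{\infty}\frac{\tilde B_{2k}(x)}{x^{2k+1}}\,dx.
\]
The first sum is identified with $-\mathcal{H}'(s)$ by termwise differentiation of $\mathcal{H}(s)=\sum_{n\geq 1}H_n/n^s$, which is justified in $\Re(s)>1$ by uniform convergence on compact subsets. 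That already produces (\ref{EM of H2 v1}) in the region of absolute convergence.

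To reach the full range $\Re(s)>1-k$, I would appeal to analytic continuation. The functions $\mathcal{H}(s)$, $\mathcal{H}'(s)$, $\mathcal{H}(s+1)$, and each $\mathcal{H}(2a+s)$ extend meromorphically to the plane (as recorded via the Laurent expansion (\ref{definition of harmonic Steiltjes}) earlier in this section). Using the standard bound $|\tilde B_{2k}(x)|\leq C_k$, the remainder integral is $O(n^{-2k})$, so combined with $H_n=O(\log n)$ the remainder Dirichlet series converges absolutely and uniformly on compact subsets of $\{\Re(s)>1-2k\}$, hence defines a holomorphic function that in particular covers $\{\Re(s)>1-k\}$. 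Both sides of the asserted identity are therefore meromorphic on this half-plane and agree on $\Re(s)>1$, so the identity extends.

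The only step needing any real care is the bound on the remainder integral (together with the legitimacy of interchanging the outer summation with the tail integral). Once that estimate is in place the rest is bookkeeping, and there is no conceptual obstacle beyond what Apostol already handled for the case $m=1$ in deriving (\ref{EM of Harmonic zeta function v1}).
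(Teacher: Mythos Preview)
Your proposal is correct and follows exactly the approach the paper indicates: multiply the Euler--Maclaurin expansion (\ref{Asymp. Harmonic}) by $H_n/n^s$, sum over $n$, and identify the pieces. In fact you supply more detail than the paper does---the paper offers only the one-line instruction ``Multiplying by $\frac{H_n}{n^s}$ and summing both sides gives'' before stating the lemma, whereas you make explicit the identification $\sum_{n}H_n\log n/n^s=-\mathcal{H}'(s)$, the $O(n^{-2k})$ bound on the remainder integral, and the analytic continuation from $\Re(s)>1$ to $\Re(s)>1-k$.
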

We can see that the poles at $s=1$ comes from $-\mathcal{H}'(s)$ and $\gamma \mathcal{H}(s)$. Consider expressing $\mathcal H$ as in $(\ref{definition of harmonic Steiltjes})$, by differentiating both sides and putting the expression to the above, we realize that:
\begin{equation}\label{Lau. expansion of H2}
    \pHz{2}{s}=\frac{2}{(s-1)^3} +\frac{2\gamma}{(s-1)^2}+\frac{\gamma^2}{s-1}+O(1)
\end{equation}

We can hereby define the coefficients of $\pHz{2}{s}$:

\begin{definition}
    Define the constants $\zHSt{n}$ as the following
    \[\pHz{2}{s}=\frac{2}{(s-1)^3} +\frac{2\gamma}{(s-1)^2}+\frac{\gamma^2}{s-1}+\sum_{n=0}^{\infty} \frac{(-1)^n}{n!}\zHSt{n}(s-1)^n\]
\end{definition}

\begin{theorem} The constants $\zHSt{n}$ has the following representation
    \begin{equation}\label{zhst rep. v1}
   \zHSt{n}=\lim_{s\to 0^-}\frac{d^n}{ds^n}\left\{\pHz{2}{1-s}+\frac{2}{s^3}-\frac{2\gamma}{s^2}+\frac{\gamma^2}{s}\right\} \end{equation}
\end{theorem}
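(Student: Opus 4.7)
The plan is to simply unwind the defining Laurent expansion via a change of variable and match Taylor coefficients. All the substantive analytic work (that $\pHz{2}{s}$ admits a Laurent expansion around $s=1$ with the claimed principal part) has already been done in establishing equation (\ref{Lau. expansion of H2}) and the definition of $\zHSt{n}$; what remains is an algebraic identification.

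First I would set $s \mapsto 1-s$ in the definition
\[\pHz{2}{s}=\frac{2}{(s-1)^3} +\frac{2\gamma}{(s-1)^2}+\frac{\gamma^2}{s-1}+\sum_{n=0}^{\infty} \frac{(-1)^n}{n!}\zHSt{n}(s-1)^n.\]
Under this substitution $(s-1)\mapsto -s$, so the powers $(s-1)^n = (-s)^n = (-1)^n s^n$ combine with the prefactor $(-1)^n$ to give a genuine Taylor series
\[\sum_{n=0}^{\infty} \frac{(-1)^n}{n!}\zHSt{n}(-s)^n=\sum_{n=0}^{\infty} \frac{\zHSt{n}}{n!} s^n,\]
while the singular terms transform to $-2/s^{3}$, $2\gamma/s^{2}$, $-\gamma^{2}/s$ respectively.

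Next I would add the correcting polar terms $\tfrac{2}{s^{3}}-\tfrac{2\gamma}{s^{2}}+\tfrac{\gamma^{2}}{s}$ on both sides, which cancels exactly the principal part produced in the previous step and leaves
\[\pHz{2}{1-s}+\frac{2}{s^{3}}-\frac{2\gamma}{s^{2}}+\frac{\gamma^{2}}{s}=\sum_{n=0}^{\infty} \frac{\zHSt{n}}{n!}\,s^{n}.\]
Since the right-hand side is a convergent power series in a punctured neighbourhood of $0$ and extends analytically to $s=0$ with value $\zHSt{0}$, differentiating $n$ times and taking $s\to 0$ extracts the $n$-th Taylor coefficient, yielding $\zHSt{n}$. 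The one-sided limit $s\to 0^{-}$ in the statement is harmless because the regularised function is analytic at $0$.

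There is essentially no obstacle here: the proof is a two-line computation provided one already knows (\ref{Lau. expansion of H2}) and the definition of $\zHSt{n}$. The only thing to be careful about is sign bookkeeping, particularly verifying that the $(-1)^{n}$ in the Laurent definition and the $(-1)^{n}$ produced by $(s-1)\mapsto -s$ multiply to $+1$, so that the regularised function is a plain (sign-free) Taylor series in $s$. If desired one could replace the limit/derivative by an appeal to Cauchy's coefficient formula
\[\zHSt{n}=\frac{n!}{2\pi i}\oint_{|s|=\varepsilon} \frac{1}{s^{n+1}}\!\left\{\pHz{2}{1-s}+\frac{2}{s^{3}}-\frac{2\gamma}{s^{2}}+\frac{\gamma^{2}}{s}\right\}ds,\]
but the direct Taylor-coefficient formula stated in the theorem is cleaner.
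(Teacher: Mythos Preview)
Your proof is correct and follows exactly the same route as the paper, which simply says ``Set $s\to 1-s$ and use the coefficients of its Taylor series.'' You have merely unpacked this one-line argument with careful sign bookkeeping, so there is nothing to add.
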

\begin{proof}
    Set $s\to 1-s$ and use the coefficients of its Taylor series.
\end{proof}
    For $\Re(s)<0$, we have the following
    \[\int_1^{\infty} \frac{\log^2(t)}{t^{1-s}}\, dt=-\frac{2}{s^3}\quad , \quad \int_1^{\infty}\frac{\log(t)}{t^{1-s}}\, dt=\frac{1}{s^2} \quad ,\quad \int_1^{\infty}\frac{1}{t^{1-s}}\, dt=-\frac{1}{s}\]

Putting them into (\ref{zhst rep. v1}) respectively gives

 \begin{theorem}
     \begin{equation}\label{zhst rep. v2}
         \zHSt{n}=\lim_{N\to \infty}\left\{\sum_{k= 1}^N \frac{H_k^2}{k} \log^n(k)-\frac{\log^{n+3}(N)}{n+3}-2\gamma \frac{\log^{n+2}(N)}{n+2}-\gamma^2 \frac{\log^{n+1}(N)}{n+1}\right\}
     \end{equation}
 \end{theorem}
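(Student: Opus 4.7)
The plan is to start from the previous theorem, which gives $\zHSt{n} = \lim_{s\to 0^-}\frac{d^n}{ds^n}\bigl\{\pHz{2}{1-s} + 2/s^3 - 2\gamma/s^2 + \gamma^2/s\bigr\}$, and convert it into the claimed Stieltjes-type limit by invoking the three integral identities displayed just before the statement. Those identities, valid for $\Re(s)<0$, combine via $(\log t+\gamma)^2 = \log^2(t) + 2\gamma\log(t) + \gamma^2$ into the single identity $\frac{2}{s^3} - \frac{2\gamma}{s^2} + \frac{\gamma^2}{s} = -\int_1^\infty (\log t+\gamma)^2 t^{s-1}\,dt$. This is precisely the shape required by the asymptotic $H_n\sim \log n + \gamma$ of (\ref{Asymp. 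Harmonic}), which makes the difference between the harmonic sum and the integral tractable.

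Next, for real $s<0$ I would rewrite the bracketed quantity as $F(s) := \sum_{k=1}^\infty H_k^2 k^{s-1} - \int_1^\infty(\log t + \gamma)^2 t^{s-1}\,dt$ and differentiate $n$ times under the sum and integral, justified by absolute convergence of the differentiated versions whenever $\Re(s)<0$. This yields $F^{(n)}(s) = \sum_{k=1}^\infty H_k^2 \log^n(k) k^{s-1} - \int_1^\infty (\log t+\gamma)^2\log^n(t) t^{s-1}\,dt$. The substitution $u=\log t$ in the partial integral from $1$ to $N$ at $s=0$ produces exactly $\log^{n+3}(N)/(n+3) + 2\gamma\log^{n+2}(N)/(n+2) + \gamma^2\log^{n+1}(N)/(n+1)$, matching the three subtracted terms in the claim. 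Setting $\Phi_N(s) := \sum_{k=1}^N H_k^2\log^n(k) k^{s-1} - \int_1^N(\log t+\gamma)^2\log^n(t) t^{s-1}\,dt$, we have $F^{(n)}(s) = \lim_{N\to\infty}\Phi_N(s)$ for every $s<0$, while the target right-hand side is $\lim_{N\to\infty}\Phi_N(0)$.

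The main obstacle is then the limit interchange $\lim_{N\to\infty}\Phi_N(0) = \lim_{s\to 0^-}\lim_{N\to\infty}\Phi_N(s)$. I would handle this by estimating the tail $\Phi_N(s) - F^{(n)}(s)$ uniformly for $s$ in a small left-interval $(-\delta,0]$: using (\ref{Asymp. Harmonic}) in the form $H_k^2 = (\log k + \gamma)^2 + O(\log k/k)$, the contribution of the correction summed over $k>N$ is $O(\log^{n+1}(N)\,N^{s-1})$, while Euler-Maclaurin applied to the leading term $(\log k+\gamma)^2\log^n(k) k^{s-1}$ gives an integral-sum discrepancy of $O(\log^{n+2}(N)\,N^{s-1})$. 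Both are uniformly dominated by $\log^{n+2}(N)/N \to 0$ on $(-\delta,0]$, so $\Phi_N \to F^{(n)}$ uniformly there. Since each $\Phi_N$ is entire, continuity gives $\lim_{s\to 0^-}\Phi_N(s) = \Phi_N(0)$, and the uniform tail bound lets the two limits commute, yielding $\lim_{N\to\infty}\Phi_N(0) = \zHSt{n}$ as claimed.
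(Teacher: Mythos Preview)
Your proposal is correct and follows essentially the same route as the paper: start from the representation $\zHSt{n}=\lim_{s\to 0^-}\frac{d^n}{ds^n}\bigl\{\pHz{2}{1-s}+2/s^3-2\gamma/s^2+\gamma^2/s\bigr\}$, replace the three pole terms by the displayed integrals $\int_1^\infty \log^j(t)\,t^{s-1}\,dt$, differentiate, and read off the Stieltjes-type limit. The paper's own argument is the one-line ``putting them into (\ref{zhst rep. v1}) respectively gives''; you have supplied the justification (uniform tail estimates via $H_k^2=(\log k+\gamma)^2+O(\log k/k)$ and Euler--Maclaurin) for the limit interchange that the paper leaves implicit.
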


Note that the condition $\Re (s)<0$ is necessary since we are taking the limit of $s$ to $0$ from the negative side.
 
 We can use (\ref{zhst rep. v2}) to solve the following problems:

 \begin{equation}\label{Problem from Vincent}
         \sum_{n\geq 1} \frac{(H_n-\log n)^3-\gamma^3}{n}=\frac{43 \pi^4}{720}-\frac{3}{4}\gamma^4-\gamma_3 -3\zHSt{1} +3\HSt{2}
    \end{equation}
    
\begin{equation}\label{Harmonic summ} \sum_{n\geq 1}\frac{H_n}{n}\left(H_n-\log(n)-\gamma\right)=\frac{5}{3}\zeta(3)-\frac{\gamma}{2}\zeta(2)-\frac{\gamma^3}{6}-\HSt{1}\end{equation}
I will omit the proofs here. Later on, we will develop a more general form of similar sums which we can easily reduce them to the above.

If we iterate the process of transforming $\pHz{m}{s}$ to $\pHz{m+1}{s}$, we can get a general picture of their poles.

\begin{theorem}[Distribution of poles]\label{Distribution of poles of H}
    The order of poles of $\pHz{m}{z}$ for $m\geq 1$ is distributed in the form
    \begin{equation}
        z\in (1,0,-1,-2,-3,-4,-5,-6, \ldots ) \longrightarrow (m+1,m,m,m-1,m,m-1,m,m-1,\ldots )
    \end{equation}
\end{theorem}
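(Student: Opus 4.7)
The plan is to proceed by induction on $m$, using the recursion that extends (\ref{EM of H2 v1}) to arbitrary $m$: multiplying (\ref{Asymp. Harmonic}) by $H_n^{m-1}/n^s$ and summing gives, for $\Re(s) > 1-2k$,
\begin{equation*}
\pHz{m}{s} = -\frac{d}{ds}\pHz{m-1}{s} + \gamma\,\pHz{m-1}{s} + \tfrac{1}{2}\,\pHz{m-1}{s+1} - \sum_{a=1}^{k}\frac{B_{2a}}{2a}\,\pHz{m-1}{s+2a} + R_{m,k}(s),
\end{equation*}
with $R_{m,k}$ holomorphic in that half-plane. Since $k$ may be taken arbitrarily large, this yields both the meromorphic continuation of $\pHz{m}{\cdot}$ to $\mathbb{C}$ and an exact location of its poles in terms of those of $\pHz{m-1}{\cdot}$.

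The base case $m=1$ is read off directly from (\ref{EM of Harmonic zeta function v1}): the double pole at $s=1$ comes from $-\zeta'(s) + \gamma\zeta(s)$ together with the pole of $\zeta$; the simple pole at $s=0$ from $\tfrac{1}{2}\zeta(s+1)$; the simple pole at $s=1-2a$ from each $\zeta(s+2a)$ summand; and at negative even integers every summand is regular. This matches the pattern $(2,1,1,0,1,0,1,0,\dots)$.

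For the inductive step, at each candidate pole $s_0 \in \{1,0,-1,-2,\dots\}$ the contribution of each summand is read off using the induction hypothesis: the derivative term raises the order at $s_0$ by one, $\gamma\,\pHz{m-1}{s}$ preserves it, and each shift $\pHz{m-1}{s+2a}$ or $\pHz{m-1}{s+1}$ inherits the order of $\pHz{m-1}{\cdot}$ at $s_0+2a$ or $s_0+1$. Taking the maximum at $s_0 = 1, 0, -1, -2, -3, -4, \dots$ yields $m+1, m, m, m-1, m, m-1,\dots$; the structural reason is that the shifts $+2a$ repeatedly land on the top-order pole at $s=1$, producing the recurring peak of order $m$ at every odd negative integer, while at even negatives only the derivative and the single shift $+1$ (landing on an odd negative) compete.

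The main obstacle is ruling out cancellation: at several $s_0$ two or more summands attain the maximum order, and one must verify that their leading Laurent coefficients do not sum to zero. For instance, at $s_0 = 0$ both $-\tfrac{d}{ds}\pHz{m-1}{s}$ and $\tfrac{1}{2}\pHz{m-1}{s+1}$ reach order $m$, while at $s_0 = -1$ both the derivative and $-\tfrac{B_2}{2}\pHz{m-1}{s+2}$ do. The case $m=2$ worked out near (\ref{Lau. expansion of H2}) illustrates the template—at $s=0$ two $\tfrac{1}{2s^2}$ contributions combine to $\tfrac{1}{s^2}$, and at $s=-1$ two copies of $-\tfrac{1}{12(s+1)^2}$ combine to $-\tfrac{1}{6(s+1)^2}$. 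In general, non-cancellation is carried through the induction by a parallel bookkeeping of the top Laurent coefficient at each $s_0$, expressed as a linear combination of top coefficients of $\pHz{m-1}{\cdot}$ at neighbouring poles; this is the delicate step, since the Bernoulli weights $-B_{2a}/(2a)$ alternate in sign and require a careful positivity/parity argument to ensure the combination never vanishes.
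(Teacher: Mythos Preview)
Your approach is the paper's approach: the paper's entire justification is the sentence ``If we iterate the process of transforming $\pHz{m}{s}$ to $\pHz{m+1}{s}$, we can get a general picture of their poles,'' which is precisely your induction on $m$ via the recursion extending (\ref{EM of H2 v1}). You are in fact more careful than the paper at this point, since you isolate the non-cancellation issue explicitly, whereas the paper does not address it where Theorem~\ref{Distribution of poles of H} is stated.

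Where the two diverge is in how non-cancellation is ultimately handled. You propose to carry the top Laurent coefficient through the induction and argue by a positivity/parity analysis of the Bernoulli weights $-B_{2a}/(2a)$; you acknowledge this as the delicate step and do not carry it out. The paper instead resolves it, implicitly and only later, in Section~10: Lemma~\ref{formal power series} and Theorem~\ref{Laurent expansion of H} compute the Laurent coefficients in closed form via
\[
\sum_{n\ge 0}\mathrm{H}^m(1-n,k+1)\,x^n \;=\; m(m-1)\cdots(m-k+1)\,(\gamma+f(x))^{m-k},\qquad f(x)=\tfrac{x}{2}+\zeta(-1)x^2+\zeta(-3)x^4+\cdots .
\]
From this, the claimed top coefficient at $s=1-n$ is (up to a nonzero combinatorial factor) the $x^n$-coefficient of a suitable power of $\gamma+f(x)$; checking that this coefficient is nonzero reduces to the vanishing/non-vanishing pattern of $\zeta$ at nonpositive integers (e.g.\ for odd $n\ge 3$ the $x^n$-coefficient of $f$ vanishes but that of $f^2$ equals $a_n(2)$ in Table~\ref{tab3}, a nonzero product of odd-negative zeta values). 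This closed-form route bypasses the sign bookkeeping you anticipate. So your sketch is correct in spirit and matches the paper's strategy; the one substantive gap you flag is real, and the paper's own cleanest way to close it is the generating-function computation of Section~10 rather than a direct inequality argument.
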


So a $m+1$ th order pole at $z=1$, $m$ th order pole at $z=0$, $m$ th order pole at $z=-1$, $m-1$ th order pole at $z=-2$ and so on. For example,
the poles of $\pHz{2}{s}$ are distributed as
\begin{itemize}
    \item Triple pole at $s=1$
    \item Double pole at $s=0,-1,-3,-5, \ldots$
    \item Simple poles at $s=-2,-4,-6 , \ldots$
\end{itemize}
Interestingly, If we take $m=0$ and aver that the above statement is true we get that $\mathcal{H}^{0}(z)$ has $-1$ order poles at negative even integers, which could be interpreted as a single zero. This corresponds to the trivial zeros of the Riemann zeta function.

\section{General case}

Applying the same technique in (\ref{Lau. expansion of H2}) to find the Laurent expansions of $\mathcal{H}^3, \mathcal{H}^4 ,\cdots$ , one may immediately recognize a pattern, stated it formally in the below. The proof is done easily by induction.

\begin{lemma} \label{ The principal part of the Laurant expansion of Hm} The principal part of the Laurant expansion of $\pHz{m}{s}$ is 
    \begin{equation}
        \mathcal{H}^m(s)=\sum_{j=0}^m \binom{m}{j}\frac{\gamma^{m-j}j!}{(s-1)^{j+1}}+O(1)
    \end{equation}
\end{lemma}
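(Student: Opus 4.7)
The plan is to prove the statement by induction on $m$. The base case $m=1$ is immediate from the definition of the harmonic Stieltjes constants in (\ref{definition of harmonic Steiltjes}), since the principal part of $\mathcal{H}(s)$ at $s=1$ is $\frac{1}{(s-1)^2}+\frac{\gamma}{s-1}$, which is exactly $\sum_{j=0}^1 \binom{1}{j}\gamma^{1-j}j!/(s-1)^{j+1}$.

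For the inductive step, I would first generalize the identity (\ref{EM of H2 v1}) by multiplying (\ref{Asymp. Harmonic}) by $H_n^m/n^s$ instead of $1/n^s$ and summing, obtaining a relation of the form
\begin{equation*}
\pHz{m+1}{s} = -\frac{d}{ds}\pHz{m}{s} + \gamma\, \pHz{m}{s} + \frac{1}{2}\pHz{m}{s+1} - \sum_{a=1}^{k} \frac{B_{2a}}{2a}\pHz{m}{s+2a} + R_{m,k}(s),
\end{equation*}
where $R_{m,k}(s)$ is the tail integral analogous to the last term in (\ref{EM of H2 v1}). The key observation is that each shifted term $\pHz{m}{s+r}$ with $r\geq 1$ has its highest-order pole at $s=1-r\leq 0$, hence is holomorphic at $s=1$; similarly $R_{m,k}$ is regular at $s=1$ for $k$ large enough. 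Consequently, the principal part of $\pHz{m+1}{s}$ at $s=1$ is determined solely by the principal part of $-\tfrac{d}{ds}\pHz{m}{s}+\gamma\, \pHz{m}{s}$.

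The remainder is a direct algebraic computation. Writing $P_m(s):=\sum_{j=0}^m \binom{m}{j}\gamma^{m-j}j!/(s-1)^{j+1}$ for the claimed principal part, I would substitute the induction hypothesis and compute
\begin{equation*}
-P_m'(s)+\gamma P_m(s) = \sum_{j=0}^{m}\binom{m}{j}\frac{\gamma^{m-j}(j+1)!}{(s-1)^{j+2}} + \sum_{j=0}^{m}\binom{m}{j}\frac{\gamma^{m+1-j}j!}{(s-1)^{j+1}}.
\end{equation*}
Reindexing the first sum by $k=j+1$ and collecting the coefficient of $\gamma^{m+1-k}k!/(s-1)^{k+1}$, Pascal's rule $\binom{m}{k-1}+\binom{m}{k}=\binom{m+1}{k}$ combines the two sums into $P_{m+1}(s)$, closing the induction.

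The only subtle point — and what I would flag as the main obstacle — is not the combinatorics but verifying that the recursion above really does isolate all of the singular behaviour at $s=1$; that is, one must justify termwise differentiation of the Dirichlet-series identity and confirm that the shifted harmonic zeta values $\pHz{m}{s+r}$ inherit from the induction hypothesis a principal part supported only at $s+r=1$, so that none of the ``correction'' terms contaminate the principal part at $s=1$. Once that regularity argument is in place, the proof reduces to the Pascal-triangle computation outlined above.
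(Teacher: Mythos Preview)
Your proposal is correct and follows essentially the same route as the paper: the paper's proof consists of the single remark that one applies the technique of (\ref{EM of H2 v1})/(\ref{Lau. expansion of H2}) inductively, and you have simply written out that induction in full, including the Pascal-rule verification that $-P_m'(s)+\gamma P_m(s)=P_{m+1}(s)$. Your flagged ``subtle point'' about the shifted terms $\pHz{m}{s+r}$ being regular at $s=1$ is precisely what the paper uses implicitly when it says the poles at $s=1$ come only from $-\mathcal{H}'(s)+\gamma\mathcal{H}(s)$.
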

Knowing such expansion, we can define the following coefficients.
\begin{definition} Define the constants $\anythharmonicStieltjescosntants{m}{n}$ as the following
\begin{equation}
    \pHz{m}{s}=\sum_{j=0}^m \binom{m}{j}\frac{\gamma^{m-j}j!}{(s-1)^{j+1}}+\sum_{n=0}^{\infty}\frac{(-1)^n}{n!}\anythharmonicStieltjescosntants{m}{n}(s-1)^n
\end{equation}

\end{definition}
\begin{theorem} For all $m\geq 0$, $n \geq 0$:
    \begin{equation}\label{general limit representation of m harmonicStieltjes n}
        \anythharmonicStieltjescosntants{m}{n}=\lim_{N\to \infty}\left[\sum_{k=1}^N \frac{H^m_k \log^n(k)}{k}-\sum_{j=0}^{m} \binom{m}{j}\gamma^{m-j}\frac{\log^{n+j+1}(N)}{n+j+1}\right]
    \end{equation}
\end{theorem}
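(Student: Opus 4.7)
The plan is to carry out for arbitrary $m$ the substitution device already used in the $m=2$ case (compare (\ref{zhst rep. v1}) and (\ref{zhst rep. v2})): substitute $s\mapsto 1-s$ to convert the Laurent expansion of $\pHz{m}{s}$ at $s=1$ into a Taylor expansion at $s=0$, represent each polar term by an elementary integral valid for $\Re s<0$, and interpret the remaining analytic part as the $N\to\infty$ limit of an explicit partial sum whose Taylor coefficients can then be read off.

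Concretely, Lemma~\ref{ The principal part of the Laurant expansion of Hm} gives
$$\pHz{m}{1-s} = \sum_{j=0}^{m}\binom{m}{j}\gamma^{m-j}\frac{(-1)^{j+1}j!}{s^{j+1}} + \sum_{n=0}^{\infty}\frac{\anythharmonicStieltjescosntants{m}{n}}{n!}\,s^{n}.$$
For $\Re s<0$ one has $\int_{1}^{\infty}\log^{j}(t)\,t^{s-1}\,dt=(-1)^{j+1}j!/s^{j+1}$, which rewrites each pole as an integral; combining with $\pHz{m}{1-s}=\sum_{k\ge 1}H_{k}^{m}/k^{1-s}$, the analytic part becomes
$$\sum_{n=0}^{\infty}\frac{\anythharmonicStieltjescosntants{m}{n}}{n!}\,s^{n} = \lim_{N\to\infty}\left[\sum_{k=1}^{N}\frac{H_{k}^{m}}{k^{1-s}} - \sum_{j=0}^{m}\binom{m}{j}\gamma^{m-j}\int_{1}^{N}\frac{\log^{j}(t)}{t^{1-s}}\,dt\right].$$
Applying $d^{n}/ds^{n}$ under the limit replaces the summand by $H_{k}^{m}\log^{n}(k)/k^{1-s}$ and the integrand by $\log^{n+j}(t)/t^{1-s}$, and then letting $s\to 0^{-}$ turns these into $H_{k}^{m}\log^{n}(k)/k$ and $\int_{1}^{N}\log^{n+j}(t)/t\,dt=\log^{n+j+1}(N)/(n+j+1)$, reproducing exactly (\ref{general limit representation of m harmonicStieltjes n}).

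The main obstacle is the joint justification of the swaps $\lim_{N}\leftrightarrow d^{n}/ds^{n}$ and $\lim_{N}\leftrightarrow\lim_{s\to 0^{-}}$. Writing $g_{N}(s)$ for the bracket on the right and $g(s)$ for its limit (analytic in a neighbourhood of $s=0$), it suffices to prove $g_{N}\to g$ uniformly on a closed half-disc $\{|s|\le\varepsilon,\ \Re s\le 0\}$; Weierstrass's convergence theorem then legitimises both exchanges simultaneously. Uniform convergence follows from (\ref{Asymp. Harmonic}), which yields
$$H_{k}^{m} = \sum_{j=0}^{m}\binom{m}{j}\gamma^{m-j}\log^{j}(k) + O\!\left(\frac{\log^{m-1}(k)}{k}\right);$$
inserted into $g(s)-g_{N}(s)$, the leading polynomial-in-$\log$ terms cancel against an Euler--Maclaurin tail of the integrals, while the remainder contributes a series whose tail is of order $\log^{m}(N)/N^{1-\Re s}$, uniformly small on the half-disc. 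An alternative route is induction on $m$ via the $\pHz{m}{s}$-analogue of (\ref{EM of Harmonic zeta function v1}), reducing the identity to the classical Stieltjes constants case ($m=0$) with the binomial factors tracked combinatorially through successive differentiations.
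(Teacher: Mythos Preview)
Your proposal is correct and follows essentially the same route as the paper: substitute $s\mapsto 1-s$ in the Laurent expansion of Lemma~\ref{ The principal part of the Laurant expansion of Hm}, replace each polar term by the elementary integral $\int_1^\infty \log^j(t)\,t^{s-1}\,dt$, differentiate $n$ times, and let $s\to 0^-$. The paper's own proof is just a one-line pointer back to the $m=2$ argument together with the integral identity, so your version is strictly more detailed; in particular, your uniform-convergence justification for interchanging $\lim_N$, $d^n/ds^n$, and $\lim_{s\to 0^-}$ goes beyond anything the paper supplies.
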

\begin{proof}
    The procedure is the same as in (\ref{zhst rep. v2}) using lemma \ref{ The principal part of the Laurant expansion of Hm}.
    
    \textit{Note that for} $\Re(s)>1$: \begin{equation}
    \int_1^{\infty}\frac{\log^j(t)}{t^s}\, dt =\frac{j!}{(s-1)^{j+1}}
\end{equation}

\end{proof}
\section{Constant terms}

In this section, we will find a closed form of the constant terms (i.e. $n=0$) in terms of Euler sums. We will proceed by studying the sum $\displaystyle S_m=\sum_{n=1}^N \frac{H_n^m}{n}$ at $N=\infty$. Since the author has already answered the proposed question on math stack exchange, some of the following section could also be seen in these \href{https://math.stackexchange.com/questions/2582497/a-closed-form-of-the-family-of-series-sum-k-1-infty-frac-lefth-k-rig/4956908#4956908}{answers} \cite{4956908},\cite{4972514}. For completeness the derivation will be given in the following sections.

Let $(a_n)$ and $(b_n)$ be two sequences. We will use the little o notation. That is, $a_n=b_n+o(1)$ if $\lim_{n\to\infty}(a_n-b_n)=0$. We start to tackle $S_m$ with summation by parts, taking the form $H_{n+1}^m-H_n^m=\sum_{k=0}^{m-1} \binom{m}{k} (-1)^{m+k+1}\frac{H^k_{n+1}}{(n+1)^{m-k}}$ gives
\begin{align*}S_m=& H_{N+1}^{m}H_N-\sum_{n=1}^N H_n (H_{n+1}^m-H_n^m) \\ 
=& H_{N+1}^{m+1}-\frac{H^m_{N+1}}{N+1}+\sum_{k=0}^{m-1}\binom{m}{k}(-1)^{m-k}\left\{\sum_{n=1}^{N+1} \frac{H_n^{k+1}}{n^{m-k}}-\sum_{n=1}^{N+1} \frac{H_n^k}{n^{m-k+1}}\right\} \\ 
(m+1)S_m = & H_{N+1}^{m+1}+\sum_{k=0}^{m-2} \binom{m}{k}(-1)^{k+m}\pHz{k+1}{m-k}\\ &-\sum_{k=-1}^{m-2}\binom{m}{k+1}(-1)^{k+m+1}\pHz{k+1}{m-k}+o(1) \\
\end{align*}
  Simplifying the right hand side, we obtain the following expansion up to a constant term:
\begin{equation}\label{asymptotic of a harmonic sum}
  \sum_{n=1}^N \frac{H_n^{m }}{n}= \frac{H_{N+1}^{m+1}}{m+1}+(-1)^{m+1}\frac{\zeta(m+1)}{m+1}+\sum_{k=0}^{m-2} \binom{m+1}{k+1} \frac{(-1)^{k+m} \mathcal{H}^{k+1}{(m-k)}}{m+1}+o(1)
\end{equation}

One could easily verify that:
\begin{equation} \label{power of harmonic numbers asymp}\frac{H_{N+1}^{m+1}}{m+1} =\frac{\gamma^{m+1}}{m+1}+\sum_{j=0}^m \binom{m}{j} \frac{\log^{j+1}(N)}{j+1}\gamma^{m-j}+o(1)\end{equation}
Putting this and $(\ref{asymptotic of a harmonic sum})$ back into (\ref{general limit representation of m harmonicStieltjes n}), the sums cancel out and we obtain:
\begin{theorem}[Closed form of constant terms] For all natural numbers $m$, the constant terms have the following closed form:
    \begin{equation}
    \anythharmonicStieltjescosntants{m}{0}=\frac{\gamma^{m+1}}{m+1}+(-1)^{m+1}\frac{\zeta(m+1)}{m+1}+\sum_{k=0}^{m-2} \binom{m+1}{k+1} \frac{(-1)^{k+m} \pHz{k+1}{m-k}}{m+1}
\end{equation}
\end{theorem}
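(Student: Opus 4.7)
The plan is to start from the limit representation (\ref{general limit representation of m harmonicStieltjes n}) specialized to $n = 0$, which writes
$$\anythharmonicStieltjescosntants{m}{0} = \lim_{N\to\infty}\left[\sum_{k=1}^N \frac{H_k^m}{k} - \sum_{j=0}^m \binom{m}{j}\gamma^{m-j}\frac{\log^{j+1}(N)}{j+1}\right],$$
and then substitute the two asymptotic formulas already at hand: (\ref{asymptotic of a harmonic sum}) for the partial sum $\sum_{k=1}^N H_k^m/k$, and (\ref{power of harmonic numbers asymp}) for the leading $H_{N+1}^{m+1}/(m+1)$ piece that appears inside (\ref{asymptotic of a harmonic sum}). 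In effect, the proof is a pure bookkeeping exercise on top of the work already carried out in the preceding section.

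First I would expand $\sum_{k=1}^N H_k^m/k$ via (\ref{asymptotic of a harmonic sum}), then replace the $H_{N+1}^{m+1}/(m+1)$ term by its expansion from (\ref{power of harmonic numbers asymp}). The divergent part of (\ref{power of harmonic numbers asymp}) is precisely $\sum_{j=0}^m \binom{m}{j}\gamma^{m-j}\log^{j+1}(N)/(j+1)$, which coincides term-by-term with the counter-term subtracted in the definition of $\anythharmonicStieltjescosntants{m}{0}$. Thus these logarithmic pieces cancel inside the bracket, and the $o(1)$ remainders from both expansions vanish when the limit is taken.

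After cancellation, the surviving terms are exactly $\gamma^{m+1}/(m+1)$, contributed by (\ref{power of harmonic numbers asymp}), together with $(-1)^{m+1}\zeta(m+1)/(m+1)$ and the Euler-sum tail $\sum_{k=0}^{m-2}\binom{m+1}{k+1}(-1)^{k+m}\pHz{k+1}{m-k}/(m+1)$ carried over from (\ref{asymptotic of a harmonic sum}). Assembling these yields the closed form stated in the theorem.

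The only step requiring real care is the matching of the two logarithmic sums prior to cancellation. One must check that the binomial expansion of $(\log N + \gamma)^{m+1}$ coming from $H_{N+1} = \log N + \gamma + o(1)$, once divided by $m+1$, lines up with the counter-term via the identity $\binom{m+1}{j+1}/(m+1) = \binom{m}{j}/(j+1)$. Any off-by-one in this reindexing would leave residual logarithmic growth and contradict the finiteness of $\anythharmonicStieltjescosntants{m}{0}$. Beyond this verification the argument is purely algebraic, and since (\ref{asymptotic of a harmonic sum}) and (\ref{power of harmonic numbers asymp}) were established earlier, no further analytic input is needed.
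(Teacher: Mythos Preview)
Your proposal is correct and follows essentially the same approach as the paper: specialize (\ref{general limit representation of m harmonicStieltjes n}) to $n=0$, substitute (\ref{asymptotic of a harmonic sum}) and (\ref{power of harmonic numbers asymp}), and observe that the logarithmic counter-terms cancel, leaving exactly the stated closed form. Your explicit mention of the identity $\binom{m+1}{j+1}/(m+1)=\binom{m}{j}/(j+1)$ makes the cancellation step more transparent than the paper's one-line ``the sums cancel out.''
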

\textit{Examples:}
\[\HarmonicStieltjesconstants{0}=\frac{\gamma^2}{2}+\frac{\zeta(2)}{2}\]
\[\secondharmonicStieltjesconstants{0}=\frac{\gamma^3}{3}+ \frac{5}{3}\zeta(3)\]
\[ \thirdharmonicStieltjescosntants{0}=\frac{\gamma^4}{4}+\frac{43}{8}\zeta(4)\]
\[\fourthharmonicStieltjescosntants{0}=\frac{\gamma^5}{5}+\frac{79}{5}\zeta(5)+3\zeta(2)\zeta(3)\]

Having such a tool, we can now easily tackle sums like:
\begin{theorem}
\begin{equation}\label{Solved sum}\begin{split}\sum_{n=1}^{\infty}\frac{H_n^m-(\log n+\gamma)^m}{n}= &{}_m \tilde\gamma_{0}-\sum_{k=0}^{m}\binom{m}{k}\gamma^{m-k}\gamma_k \\ =& -\frac{m}{m+1}\gamma^{m+1}+(-1)^{m+1}\frac{\zeta(m+1)}{m+1}\\ & +\sum_{k=0}^{m-2}\binom{m+1}{k+1}\frac{(-1)^{k+m}\mathcal{H}^{k+1}(m-k)}{m+1} 
-\sum_{k=1}^{m}\binom{m}{k}\gamma_k\gamma^{m-k}\end{split}\end{equation}
\end{theorem}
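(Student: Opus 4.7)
The plan is to split the summand using the binomial theorem, match each piece to a known regularized limit, and let the divergent $\log^{j+1}(N)$ contributions annihilate. Concretely, I would truncate the series at $N$ and write
\begin{equation*}
\sum_{n=1}^{N}\frac{H_n^m-(\log n+\gamma)^m}{n}
= \sum_{n=1}^{N}\frac{H_n^m}{n}-\sum_{k=0}^{m}\binom{m}{k}\gamma^{m-k}\sum_{n=1}^{N}\frac{\log^{k}n}{n}.
\end{equation*}
The first sum on the right is exactly the object regularized by equation (\ref{general limit representation of m harmonicStieltjes n}) with $n=0$, so it equals ${}_m\tilde\gamma_{0}+\sum_{j=0}^{m}\binom{m}{j}\gamma^{m-j}\log^{j+1}(N)/(j+1)+o(1)$. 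The inner sums in the second term are regularized by the definition of the Stieltjes constants, yielding $\sum_{n=1}^{N}\log^{k}(n)/n=\gamma_{k}+\log^{k+1}(N)/(k+1)+o(1)$.

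Next I would subtract: reindexing $j\leftrightarrow k$, the $\log^{k+1}(N)/(k+1)$ pieces cancel term by term between the two sums. Passing to $N\to\infty$ produces the compact form
\begin{equation*}
\sum_{n=1}^{\infty}\frac{H_n^m-(\log n+\gamma)^m}{n}={}_m\tilde\gamma_{0}-\sum_{k=0}^{m}\binom{m}{k}\gamma^{m-k}\gamma_{k},
\end{equation*}
which is the first equality claimed.

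For the second equality I would substitute the closed form of ${}_m\tilde\gamma_{0}$ from the preceding theorem and separate the $k=0$ term in the $\gamma_{k}$ sum, using $\gamma_{0}=\gamma$. The $k=0$ contribution $\binom{m}{0}\gamma^{m}\gamma=\gamma^{m+1}$ combines with the leading $\gamma^{m+1}/(m+1)$ from ${}_m\tilde\gamma_{0}$ to give $-m\gamma^{m+1}/(m+1)$, and the remaining Euler sum and Stieltjes terms line up to match the stated expression.

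The argument is essentially bookkeeping; the only real point to watch is that both regularizations (for $H_{n}^{m}/n$ and for $\log^{k}n/n$) use exactly the same family of power-of-$\log$ counterterms $\log^{j+1}(N)/(j+1)$ weighted by $\binom{m}{j}\gamma^{m-j}$, so the cancellation is clean and no additional $o(1)$ analysis beyond the two cited limits is needed.
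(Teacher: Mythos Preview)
Your argument is correct and coincides with the paper's second proof: truncate at $N$, binomially expand $(\log n+\gamma)^m$, replace each $\sum_{n\le N}\log^k(n)/n$ by $\gamma_k+\log^{k+1}(N)/(k+1)+o(1)$, and let the $\log^{j+1}(N)$ counterterms cancel against those in the definition (\ref{general limit representation of m harmonicStieltjes n}) of ${}_m\tilde\gamma_0$. The only cosmetic difference is that the paper finishes by invoking (\ref{asymptotic of a harmonic sum}) and (\ref{power of harmonic numbers asymp}) directly, whereas you route through ${}_m\tilde\gamma_0$ first and then plug in its closed form; the content is identical.
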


\begin{proof} [\proofname\ 1]
    The sum converges, considering the contant terms of the Laurent expansion of $\sum_{n=1}^{\infty}\frac{H_n^m-(\log n+ \gamma)^m}{n^{s}}$ around $s=1$ proves the theorem. 
\end{proof}

\begin{proof} [\proofname\ 2] The following is a proof without using the analytical tools developed in this paper. We can write the infinite sum in (\ref{Solved sum}) as the limit of finite sums.
    \begin{equation*}\begin{split}&
\lim_{N\to\infty}\left[\sum_{n=1}^N\frac{H^m_n}{n}-\sum_{n=1}^N\frac{(\log n+\gamma)^m}{n}\right]\\ =& \lim_{N\to\infty}\left[\sum_{n=1}^N\frac{H^m_n}{n}-\sum_{n=1}^N\frac{1}{n}\sum_{k=0}^m\binom{m}{k}\gamma^{m-k}\log^k(n)\right] \\
= & \lim_{N\to\infty}\Bigg[\sum_{n=1}^N\frac{H^m_n}{n}-\sum_{k=0}^m\binom{m}{k}\gamma^{m-k}{\underbrace{\sum_{n=1}^N\frac{1}{n}\log^k(n)}_{\gamma_k+\frac{\log^{k+1}(N)}{k+1}}}\Bigg]
\end{split}\end{equation*}
Where using (\ref{asymptotic of a harmonic sum}) and (\ref{power of harmonic numbers asymp}) finishes the proof.
\end{proof}

\textit{Examples:}

$$ \sum_{n=1}^{\infty}\frac{H_n-\log n-\gamma}{n}=\frac{\zeta(2)}{2}-\frac{\gamma^2}{2}-\gamma_1$$
$$ \sum_{n=1}^{\infty}\frac{H_n^2-(\log n+\gamma)^2}{n}=\frac{5\zeta(3)}{3}-\frac{2\gamma^3}{3}-2\gamma\gamma_1-\gamma_2$$
$$ \sum_{n=1}^{\infty}\frac{H_n^3-(\log n+\gamma)^3}{n}=-\frac{3}{4}\gamma^4+\frac{43}{8}\zeta(4)-3\gamma^2\gamma_1-3\gamma\gamma_2-\gamma_3$$
$$ \sum_{n=1}^{\infty}\frac{H_n^4-(\log n+\gamma)^4}{n}=-\frac{4}{5}\gamma^5+\frac{79}{5}\zeta(5)+3\zeta(2)\zeta(3)-4\gamma^3\gamma_1-6\gamma^2\gamma_2-4\gamma\gamma_3-\gamma_4$$

Similar formulas could be obtained, like the following:

\begin{equation}
    \sum_{n=1}^{\infty}\frac{(H_n-\log n)^m -\gamma^m}{n}=-\gamma^{m+1}+\sum_{k=0}^{m}(-1)^k\binom{m}{k}{}_{m-k}\tilde\gamma_k
\end{equation}
However, the author has not yet found a closed form of ${}_m\tilde \gamma_k $ for $k\geq 1$.
\begin{equation}\begin{split}
    \sum_{n=1}^{\infty}\frac{H^m_n(H_n-\log n-\gamma)}{n}=& {}_m\tilde\gamma_1 -\frac{\gamma^{m+2}}{(m+1)(m+2)}+(-1)^m\frac{\zeta(m+2)}{m+2}+(-1)^m \gamma \frac{\zeta(m+1)}{m+1}\\ & +\sum_{k=0}^{m-1}\binom{m+2}{k+1}\frac{(-1)^{k+m+1}}{m+2}\mathcal{H}^{k+1}(m+1-k)\\ & -\gamma \sum_{k=0}^{m-2}\binom{m+1}{k+1} \frac{(-1)^{k+m}}{m+1}\pHz{k+1}{m-k}
\end{split}\end{equation}

The above are general cases for (\ref{Problem from Vincent}) and (\ref{Harmonic summ}) respectively. 

\section{Alternating sums}
We briefly introduce and discuss the results inferred from studying the alternating sums $\mathcal{J}^m$. Consider the following result:
\begin{lemma}\label{coefficient to difference coefficient} If $a_0=0$, then
    \begin{equation} 
        \sum_{n=1}^{\infty} a_n x^n =\frac{1}{1-x}\sum_{n=1}^{\infty} (a_n-a_{n-1})x^n
    \end{equation}
\end{lemma}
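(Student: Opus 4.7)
The plan is to verify the identity by multiplying through by $(1-x)$ and showing that the resulting series agree term by term, which reduces the claim to a simple telescoping observation.

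First I would write
\begin{equation*}
(1-x)\sum_{n=1}^{\infty} a_n x^n = \sum_{n=1}^{\infty} a_n x^n - \sum_{n=1}^{\infty} a_n x^{n+1},
\end{equation*}
then reindex the second sum by $n \mapsto n-1$ so that it becomes $\sum_{n=2}^{\infty} a_{n-1} x^n$. Combining, the right side equals
\begin{equation*}
a_1 x + \sum_{n=2}^{\infty}(a_n - a_{n-1}) x^n.
\end{equation*}
Since the hypothesis $a_0 = 0$ gives $a_1 = a_1 - a_0$, the stray $a_1 x$ term merges with the sum and we get $\sum_{n=1}^{\infty}(a_n - a_{n-1}) x^n$, which is exactly what is claimed after dividing by $1-x$.

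Alternatively one could present it as a Cauchy product argument: expand $\frac{1}{1-x} = \sum_{k=0}^\infty x^k$ and multiply with $\sum_{n=1}^\infty(a_n - a_{n-1}) x^n$, so the coefficient of $x^n$ on the right becomes $\sum_{k=1}^{n}(a_k - a_{k-1})$, which telescopes to $a_n - a_0 = a_n$. This is conceptually the same calculation but makes the role of $a_0=0$ more transparent.

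The only subtlety is convergence, but since the statement is purely formal (an identity of formal power series), no analytic hypothesis is needed; if one wants the identity at a particular $x$, it suffices to assume the original series $\sum a_n x^n$ converges absolutely so that the rearrangements and Cauchy product are justified. I do not anticipate a real obstacle here — the lemma is essentially Abel summation stated for power series.
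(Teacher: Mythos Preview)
Your proof is correct and follows exactly the approach the paper indicates: the paper simply remarks that the lemma ``can be seen true by multiplying both sides by $1-x$,'' and you carry out precisely that computation in detail (with a Cauchy-product variant as a bonus).
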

The above lemma can be seen true by multiplying both sides by $1-x$
\begin{definition}\label{harmonic polylogarithm} Define the harmonic polylogarithm as
    \begin{equation}
        \powerhl{s}{m}{z}:=\sum_{n=1}^{\infty} \frac{H_n^m}{n^s}z^n 
 \quad ,\quad 
        \harmonicpolylogarithm{s}{z}:=\sum_{n=1}^{\infty} \frac{H_n}{n^s}z^n
    \end{equation}
\end{definition}
Indeed, this function shares similarity with the typical polylogarithm $\mathrm{Li}_s(z)=\sum_{n=1}^{\infty}\frac{z^n}{n^s}$.
\begin{theorem}[Dirichlet's regularization of $\mathcal{J}^m$ at $0$]
    \begin{equation}\label{value of powerharmoniceta at 0}
        \powerharmonicetafunction{m}{0}=\frac{1}{2}\sum_{k=1}^{m} \binom{m}{k} (-1)^{k+1}\powerharmonicetafunction{m-k}{k}
    \end{equation} 
    \end{theorem}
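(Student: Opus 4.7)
The plan is to invoke Lemma \ref{coefficient to difference coefficient} with $a_n = H_n^m/n^s$ (setting $a_0 := 0$) and $x = -1$, working in the half-plane $\Re(s) > 0$ where the alternating series defining $\mathcal{J}^m(s)$ converges and $a_n \to 0$. The lemma then yields the telescoping identity
\begin{equation*}
\mathcal{J}^m(s) \;=\; \frac{1}{2}\sum_{n=1}^\infty (-1)^{n+1}\left[\frac{H_n^m}{n^s} - \frac{H_{n-1}^m}{(n-1)^s}\right],
\end{equation*}
and I would then pass to the limit $s \to 0^+$ to recover $\mathcal{J}^m(0)$ as the Dirichlet-regularized value.

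The algebraic input is the binomial identity coming from $H_{n-1} = H_n - 1/n$,
\begin{equation*}
H_n^m - H_{n-1}^m = \sum_{k=1}^{m} \binom{m}{k}(-1)^{k+1}\frac{H_n^{m-k}}{n^k}.
\end{equation*}
I would split each bracket above as $(H_n^m - H_{n-1}^m)/n^s + H_{n-1}^m\left(1/n^s - 1/(n-1)^s\right)$. After swapping the finite $k$-sum with the (alternating) convergent $n$-sum, the first piece contributes exactly $\sum_{k=1}^{m}\binom{m}{k}(-1)^{k+1}\mathcal{J}^{m-k}(k+s)$, which tends to the desired right-hand side $\sum_{k=1}^{m}\binom{m}{k}(-1)^{k+1}\mathcal{J}^{m-k}(k)$ as $s \to 0^+$, by continuity of each $\mathcal{J}^{m-k}$ at positive integer arguments.

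The main obstacle is showing that the second piece, $S_2(s) := \sum_{n\geq 2}(-1)^{n+1}H_{n-1}^m\left(1/n^s - 1/(n-1)^s\right)$, vanishes in the limit $s \to 0^+$. A naive termwise bound will fail, because $s\cdot\mathcal{H}^m(s+1)$ diverges at $s=0$; the alternating signs must be exploited. I would use $1/n^s - 1/(n-1)^s = -s\int_{n-1}^n t^{-s-1}\,dt$ and interchange summation with integration to write $S_2(s) = -s\int_1^\infty (-1)^{\lfloor t\rfloor}H_{\lfloor t\rfloor}^m\, t^{-s-1}\,dt$. Integration by parts against the antiderivative $F(T) := \int_1^T (-1)^{\lfloor t\rfloor}H_{\lfloor t\rfloor}^m\,dt$, which satisfies $F(T) = O(\log^m T)$ by a summation-by-parts estimate on $\sum (-1)^n H_n^m$, yields $S_2(s) = -s(s+1)\int_1^\infty F(t)\,t^{-s-2}\,dt$. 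The remaining integral is bounded uniformly as $s\to 0^+$ (by dominated convergence, since $|F(t)/t^2| \leq C\log^m t/t^2$ is integrable on $[1,\infty)$), so the explicit prefactor $s$ annihilates $S_2(s)$ in the limit and the claimed identity follows.
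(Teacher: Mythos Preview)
Your argument is correct. The key difference from the paper is \emph{which} regularization you push through. The paper applies Lemma~\ref{coefficient to difference coefficient} with $a_n=H_n^m$ in the power-series variable $x$, obtaining
\[
\powerhl{0}{m}{x}=\frac{1}{1-x}\sum_{k=1}^m\binom{m}{k}(-1)^{k+1}\powerhl{k}{m-k}{x},
\]
and then takes the Abel limit $x\to -1^+$; the factor $1/(1-x)$ becomes $1/2$ and the identity drops out in one line with no cross-term to control. You instead keep $x=-1$ fixed and carry the Dirichlet parameter $s$, applying the lemma with $a_n=H_n^m/n^s$; this forces you to split off and kill the extra piece $S_2(s)$ coming from the fact that $n^{-s}-(n-1)^{-s}\neq 0$. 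Your integration-by-parts estimate for $S_2(s)$ is sound (the bound $F(T)=O(\log^m T)$ is exactly Theorem~\ref{Asymptotic of sums of alternating Harmonic powers}), so the route works, but it is visibly longer. On the other hand, your approach yields the Dirichlet-regularized value $\lim_{s\to 0^+}\mathcal{J}^m(s)$ directly, whereas the paper's argument tacitly identifies the Abel limit $\lim_{x\to -1^+}\powerhl{0}{m}{x}$ with $\mathcal{J}^m(0)$, an interchange it does not justify; in that sense your proof is the more honest one for a statement labelled ``Dirichlet's regularization.''
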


    \textit{Examples:} \[\begin{split}
        \mathcal{J}(0) &= \frac{\log^2(2)}{2} \\
        \mathcal{J}^2(0)&= \frac{\zeta(2)}{4}-\frac{\log^2(2)}{2}\\
        \mathcal{J}^3(0)&= \frac{9}{16}\zeta(3)-\frac{3}{4}\zeta(2)\log(2)+\frac{\log^3(2)}{2}\\ 
        \mathcal{J}^4(0) &= 2\mathrm{Li}_4\left(\frac{1}{2}\right)-\frac{23}{16}\zeta(4)-\frac{1}{2}\zeta(3)\log(2)+\zeta(2)\log^2(2)-\frac{5}{12}\log^4(2)
    \end{split}\]
    \begin{proof}
We start by setting $a_n=H_n^m$ in lemma \ref{coefficient to difference coefficient} and take the form $H_n^m-H_{n-1}^m=\sum_{k=1}^m \binom{m}{k}\frac{(-1)^{k+1}}{n^k}H_n^{m-k}$. This gives

    \[\sum_{n=1}^{\infty} H_n^m x^n =\frac{1}{1-x}\sum_{n=1}^{\infty} x^n \sum_{k=1}^m \binom{m}{k}\frac{(-1)^{k+1}}{n^k}H_n^{m-k}\]
    \begin{equation}\label{harmonic power generating function in terms of lower weights}
\powerhl{0}{m}{x}=\sum_{k=1}^m \binom{m}{k} (-1)^{k+1}\frac{\powerhl{k}{m-k}{x}}{1-x}
    \end{equation}
    The left hand side is equal to $\lim_{s\to 0}\sum_{n=1}^{\infty} \frac{H_n^m}{n^s} x^n$.  Which if we take $x\to -1^{+}$ both sides we get
    \[\lim_{s\to 0}\powerhl{s}{m}{-1}=\sum_{k=1}^m \binom{m}{k} (-1)^{k+1}\frac{\powerhl{k}{m-k}{-1}}{2}\]
    Which proves the theorem.
    \end{proof}
    An application of (\ref{value of powerharmoniceta at 0}) is the following:
\begin{corollary}\label{Alternating harmonic power sum}
\begin{equation}\label{Alternation version of unsolved problem}\begin{split}
    &\sum_{n=1}^{\infty}(-1)^{n+1}\left(H_n^m-(\log n+\gamma)^m\right) \\ & =-\frac{\gamma^m}{2}+\sum_{k=1}^{m}\binom{m}{k}(-1)^{k+1}\left(\frac{\powerharmonicetafunction{m-k}{k}}{2}+\gamma^{m-k}\eta^{(k)}(0)\right)\end{split}
\end{equation}
\end{corollary}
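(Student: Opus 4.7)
The plan is to prove the identity by interpreting the convergent sum on the left as the $s\to 0^{+}$ limit of a Dirichlet series, splitting that Dirichlet series into a $\powerharmonicetafunction{m}{s}$ piece and a polynomial-in-$\log n$ piece, and reading off the value at $s=0$ using the theorem on $\powerharmonicetafunction{m}{0}$ just proved and the derivatives of $\eta$. First I would verify that the series on the left converges in the ordinary sense: binomially expanding $(\log n+\gamma)^m$ and using $H_n-\log n-\gamma=\frac{1}{2n}+O(1/n^{2})$ from (\ref{Asymp. Harmonic}) gives $H_n^m-(\log n+\gamma)^m=O(\log^{m-1}n/n)$, so the terms decrease to $0$ and alternate, and the Leibniz test applies.

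Next, for $\Re(s)>0$ I would split
\begin{equation*}
\sum_{n=1}^{\infty}\frac{(-1)^{n+1}\bigl(H_n^m-(\log n+\gamma)^m\bigr)}{n^{s}}
=\powerharmonicetafunction{m}{s}-\sum_{k=0}^{m}\binom{m}{k}\gamma^{m-k}\sum_{n=1}^{\infty}\frac{(-1)^{n+1}\log^{k}n}{n^{s}},
\end{equation*}
both Dirichlet series on the right being absolutely convergent in the half plane $\Re(s)>0$ because $H_n^m$ and $\log^k n$ grow only polylogarithmically. The inner sum on the right equals $(-1)^k\eta^{(k)}(s)$, so the whole right hand side is analytic in a neighborhood of $s=0$. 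By the standard Abel-type theorem for Dirichlet series (if $\sum c_n$ converges, then $\sum c_n/n^{s}\to\sum c_n$ as $s\to 0^{+}$) applied to the convergent series on the left, taking $s\to 0^{+}$ yields
\begin{equation*}
\sum_{n=1}^{\infty}(-1)^{n+1}\bigl(H_n^m-(\log n+\gamma)^m\bigr)
=\powerharmonicetafunction{m}{0}-\sum_{k=0}^{m}\binom{m}{k}\gamma^{m-k}(-1)^{k}\eta^{(k)}(0).
\end{equation*}

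Finally I would substitute $\powerharmonicetafunction{m}{0}=\tfrac12\sum_{k=1}^{m}\binom{m}{k}(-1)^{k+1}\powerharmonicetafunction{m-k}{k}$ from (\ref{value of powerharmoniceta at 0}), peel off the $k=0$ contribution of the second sum using $\eta(0)=\tfrac12$ to obtain the $-\gamma^m/2$ term, and rewrite the remaining $k\ge 1$ terms using $(-1)^{k+1}=-(-1)^{k}$ so as to pair them with the $\powerharmonicetafunction{m-k}{k}/2$ contributions. The result matches (\ref{Alternation version of unsolved problem}) exactly. The only delicate point, and what I expect to be the main obstacle, is the justification of the interchange of limits at $s=0$: one must confirm that the partial sums $\sum_{n\le N}c_n$ are bounded (which follows from the Leibniz alternating-series estimate together with the $O(\log^{m-1}n/n)$ bound) so that the classical Abel summation lemma for Dirichlet series applies and the regularized value genuinely equals the convergent sum.
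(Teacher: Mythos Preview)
Your proposal is correct and is essentially what the paper intends: the corollary is presented immediately after (\ref{value of powerharmoniceta at 0}) as ``an application'' with no written proof, and your argument---split the Dirichlet series, identify the $\log^{k}n$ piece as $(-1)^{k}\eta^{(k)}(s)$, let $s\to 0^{+}$ via the Abel theorem for Dirichlet series, and substitute (\ref{value of powerharmoniceta at 0}) together with $\eta(0)=\tfrac12$---is exactly the intended route. The only cosmetic point is that for the Leibniz test you should note that $c_n\sim \tfrac{m}{2}\log^{m-1}(n)/n$ is \emph{eventually} monotone; the convergence and the boundedness of partial sums then follow as you say.
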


\section{Particular values at negative integers} With a similar procedure from theorem \ref{Distribution of poles of H}, we can see that the function $\mathcal{J}^m(s)$ is holomorphic everywhere. We now find its values at negative integers.
Consider the expression, for $-1<y<1$ and $-\pi< \theta<\pi$:
\begin{equation}\label{S}\begin{split}
    S^m(y)=& \left(\mathrm{Hl}_{0}^m(y)-\frac{\theta^2}{2!}\mathrm{Hl}_{-2}^m(y)+\frac{\theta^4}{4!}\mathrm{Hl}_{-4}^m(y)-\cdots\right)\\ & +i\left(\frac{\theta}{1!}\mathrm{Hl}_{-1}^m(y)-\frac{\theta^3}{3!}\mathrm{Hl}_{-3}^m(y)+\frac{\theta^5}{5!}\mathrm{Hl}_{-5}^m(y)-\cdots\right)\end{split}
\end{equation}
It could be seen that this expression is equal to $\sum_{k=1}^{\infty}H_k^m y^k e^{i\theta k}$. Let's consider the $m=1$ case.
\begin{align*}
    &S^1(-y) = -\frac{\log(1+ye^{i\theta})}{1+ye^{i\theta}}\\
    &=  -\frac{1+y \cos \theta}{1+2y\cos \theta+y^2}\cdot \frac{1}{2}\log(1+2y\cos \theta+y^2)-\frac{y\sin \theta}{1+2y\cos\theta +y^2} \arctan \left(\frac{y\sin \theta}{1+y\cos \theta}\right) \\ & +i\left\{\frac{y\sin \theta}{1+2y\cos\theta +y^2}\cdot \frac{1}{2}\log(1+2y\cos \theta+y^2)- \frac{1+y \cos \theta}{1+2y\cos \theta+y^2} \arctan \left(\frac{y\sin \theta}{1+y\cos \theta}\right)\right\} \\ 
\end{align*}
Taking the limit as $y\to 1$ gives
$$\lim_{y\to 1}S^1(-y)= -\frac{1}{2}\log\left(2\cos\frac{\theta}{2}\right)-\frac{\theta}{4}\tan\left(\frac{\theta}{2}\right)+i\left\{-\frac{\theta}{4}+\frac{1}{2}\tan\left(\frac{\theta}{2}\right)\log\left(2\cos\frac{\theta}{2}\right)\right\}$$
If we expand the above as a Taylor series in $\theta$ and compare the coefficients with (\ref{S}). We get for $n\geq 1$:

\begin{align}\label{Negative integers of J}
    \mathcal{J}(0)=& \frac{1}{2}\log 2 \\
    \mathcal{J}(-1)= &\frac{1}{4}-\frac{1}{4}\log 2 \\
    \mathcal{J}(-2n)= &-\frac{B_{2n}}{4n}(4^n-1)(2n-1) \\
    \mathcal{J}(1-2n)= &-\frac{B_{2n}(4^n-1)}{2n}\log 2-\sum_{k=1}^{n-1}\frac{(2n-1)!B_{2k}B_{2n-2k}(4^k-1)(4^{n-k}-1)}{2k(2k)!(2n-2k)!}
\end{align}
A more general form can be seen in (\ref{harmonic polylogarithm in terms of polylog}).
\par
Note that the considered functions has their Taylor series $$\frac{1}{2}\tan\frac{\theta}{2}=\sum_{n=1}^{\infty}\frac{B_{2n}(-1)^{n+1}(4^n-1)}{(2n)!}\theta^{2n-1}=\frac{\theta}{1!}\eta(-1)-\frac{\theta^3}{3!}\eta(-3)+\frac{\theta^5}{5!}\eta(-5)-\cdots$$
$$\log\left(\cos \frac{\theta}{2}\right)=\sum_{n=1}^{\infty}\frac{B_{2n}(-1)^{n}(4^n-1)}{(2n)(2n)!}\theta^{2n}=-\frac{\theta^2}{2!}\eta(-1)+\frac{\theta^4}{4!}\eta(-3)-\frac{\theta^6}{6!}\eta(-5)+\cdots$$
With a slight modification of $S$, one could also prove that $\mathcal{H}(-2n)=\frac{B_{2n} (2n-1)}{4n}$ for $n\geq 1$. Note that the values of $\mathcal{H}(-2n)$ can also be found using the Euler Macluarin summation formula, notably \cite{matsuoka1982values} and \cite{apostol1984dirichlet}. The $m=0$ case gives the values of the Dirichlet eta function at negative values.

\cite{boyadzhiev2009alternating} obtained the above result using the Euler-Boole Summation formula. As seen, the above requires only elementary manipulations. Even better, this procedure works for $m=2$ with a reasonable amount of effort.
It should be anticipated that the coefficients soon get messy and complicated. Therefore, we will start writing the expansion as functions and stop writing their coefficients in its explicit closed form.
Using the identity $$\sum_{n=1}^{\infty}H_n^2 x^n=\frac{\log^2(1-x)+\mathrm{Li}_2(x)}{1-x}$$
we can deduce the following;
\begin{theorem}
   \begin{equation}\begin{split}
         \mathcal{J}^2(0)-&\frac{\theta^2}{2!}\mathcal{J}^2(-2)+\frac{\theta^4}{4!}\mathcal{J}^2(-4)-\cdots =\frac{1}{2}\left(\frac{\pi^2}{12}-\log^2\left(2\cos \frac{\theta}{2}\right)\right)\\ &-\frac{1}{2}\tan\frac{\theta}{2}\left( \theta \log\left(2\cos \frac{\theta}{2}\right)-\frac{\theta}{1!}\eta(1)+\frac{\theta^3}{3!}\eta(-1)-\frac{\theta^5}{5!}\eta(-3)+\cdots\right) \end{split}\end{equation}\begin{equation}\begin{split}
            \frac{\theta}{1!}\mathcal{J}^2(-1)-& \frac{\theta^3}{3!}\mathcal{J}^2(-3)+\frac{\theta^5}{5!}\mathcal{J}^2(-5)-\cdots = \frac{1}{2}\tan \frac{\theta}{2}\left(\log^2\left(2\cos \frac{\theta}{2}\right)-\frac{\pi^2}{12}\right)\\ &-\frac{1}{2}\left(\theta \log\left(2\cos \frac{\theta}{2}\right) -\frac{\theta}{1!}\eta(1)+\frac{\theta^3}{3!}\eta(-1)-\frac{\theta^5}{5!}\eta(-3)+\cdots\right)\end{split}
         \end{equation}
    These values can be written purely in terms of the $m=1$ and $m=0$ case: \begin{equation}\label{negative integers of J2}
        \mathcal{J}^2(-k)=\mathcal{J}(1-k)-\frac{\eta(2-k)}{2}+\sum_{j=0}^{k-1}\binom{k}{j}\eta(j-k)\left(\eta(2-j)-2\mathcal{J}(1-j)\right)
    \end{equation}
They are tabulated below:
       \begin{table}[h] \caption{Negative values of $\mathcal{J}^2$}\label{tab1}
        \begin{tabular}{|c|l|} \hline 
     $n$ & $\mathcal{J}^2(-n)$  \\ \hline 
     $0$  & $\frac{\pi^2}{24}-\frac{\log^2 2}{2}$  \\ \hline    
     $1$ & $ \frac{\log^2(2)}{4}-\frac{\pi^2}{48}$ \\ \hline  
     $2$ &  $-\frac{\log 2}{4}$ \\ \hline   
     $3$ & $\frac{\pi^2}{96}-\frac{1}{4}+\frac{3}{8}\log 2-\frac{\log^2 2}{8}$ \\ \hline  
     $4$ & $\frac{5}{16}+\frac{\log 2}{8} $ \\ \hline   
     $5$ & $\frac{23}{32}-\frac{\pi^2}{48}+\frac{\log^2 2}{4}-\frac{15}{16}\log 2$ \\ \hline  
     $6$ & $-\frac{49}{32}-\frac{\log2}{4}$  \\ \hline  
     $7$ & $-\frac{129}{32}+\frac{17 \pi^2}{192}-\frac{17}{16}\log^2 2+\frac{147}{32}\log 2$ \\ \hline  
     $8$ & $\frac{717}{64}+\frac{17}{16}\log 2 $ \\ \hline  
     $9$ & $\frac{4639}{128}-\frac{31}{48}\pi^2+\frac{31}{4}\log^2 2-\frac{1185}{32}\log 2$ \\ \hline  
     $10$& $-\frac{7711}{64}-\frac{31}{4}\log 2$ \\ \hline  
     $\cdots$ & $\cdots$
 \\ \hline\end{tabular}
    \end{table}
    
\end{theorem}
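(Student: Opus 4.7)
The plan is to extend the method used for the $m=1$ case in the preceding computation: start from the closed form $\sum_{n\geq 1}H_n^2\,x^n=(\log^2(1-x)+\mathrm{Li}_2(x))/(1-x)$ stated just before the theorem, specialize to $x=-e^{i\theta}$ with $-\pi<\theta<\pi$ (interpreted as an Abel limit from $|x|<1$), and match real and imaginary parts against the formal expansion (\ref{S}) of $S^2$ at $y=-1$, under which $\mathrm{Hl}^2_{-k}(-1)=-\mathcal{J}^2(-k)$. Using $1+e^{i\theta}=2\cos(\theta/2)e^{i\theta/2}$ gives $\log(1+e^{i\theta})=\log(2\cos(\theta/2))+i\theta/2$ and hence $\log^2(1+e^{i\theta})=\log^2(2\cos(\theta/2))-\theta^2/4+i\theta\log(2\cos(\theta/2))$. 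For $\mathrm{Li}_2(-e^{i\theta})$ I would substitute its Abel-regularized Taylor expansion $-\sum_{k\geq 0}\eta(2-k)(i\theta)^k/k!$; since $\eta(0)=1/2$ and $\eta(-2n)=0$ for $n\geq 1$, its real part collapses to $\theta^2/4-\pi^2/12$ while its imaginary part is exactly $-\tfrac{\theta}{1!}\eta(1)+\tfrac{\theta^3}{3!}\eta(-1)-\tfrac{\theta^5}{5!}\eta(-3)+\cdots$, the series appearing in the theorem. The two $\theta^2/4$ contributions cancel, leaving a numerator whose real part is $\log^2(2\cos(\theta/2))-\pi^2/12$ and whose imaginary part is $\theta\log(2\cos(\theta/2))+\operatorname{Im}\mathrm{Li}_2(-e^{i\theta})$. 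Dividing by $1+e^{i\theta}$ is the same as multiplying by $\tfrac12(1-i\tan(\theta/2))$, and separating into real and imaginary parts yields the two displayed generating-function identities.

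For the explicit formula (\ref{negative integers of J2}) I would clear the denominator and equate Taylor coefficients in $(1+e^{i\theta})\mathrm{Hl}_0^2(-e^{i\theta})=2\mathrm{Hl}_1(-e^{i\theta})-\mathrm{Li}_2(-e^{i\theta})$, writing each factor as a formal series in $(i\theta)^k/k!$ with coefficients $-\mathcal{J}^2(-k)$, $-\mathcal{J}(1-k)$, $-\eta(2-k)$ for the three polylogarithmic pieces, and $(d_0,d_1,\ldots)=(2,1,1,\ldots)$ for $1+e^{i\theta}$. Extracting the coefficient of $(i\theta)^m/m!$ produces the recursion
\[
2\mathcal{J}^2(-m)+\sum_{k=0}^{m-1}\binom{m}{k}\mathcal{J}^2(-k)=2\mathcal{J}(1-m)-\eta(2-m).
\]
With the exponential generating functions $A(\theta)=\sum_m\mathcal{J}^2(-m)\theta^m/m!$ and $B(\theta)=\sum_m\bigl(2\mathcal{J}(1-m)-\eta(2-m)\bigr)\theta^m/m!$, this recursion is equivalent to the single identity $(1+e^\theta)A(\theta)=B(\theta)$. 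Combining with the standard expansion $1/(1+e^\theta)=\sum_m(-1)^m\eta(-m)\theta^m/m!$ (a direct EGF computation from $\eta(s)=\sum(-1)^{n+1}n^{-s}$) and taking the Cauchy product gives
\[
\mathcal{J}^2(-m)=\sum_{j=0}^{m}\binom{m}{j}(-1)^{m-j}\eta(j-m)\bigl(2\mathcal{J}(1-j)-\eta(2-j)\bigr).
\]
Isolating the $j=m$ term, which simplifies to $\mathcal{J}(1-m)-\eta(2-m)/2$ via $\eta(0)=\tfrac12$, and noting that $\eta(-2n)=0$ kills the $j<m$ summands with $m-j$ a positive even integer so that only odd $m-j$ survive in the tail (with $(-1)^{m-j}=-1$), reproduces (\ref{negative integers of J2}) exactly.

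The main obstacle is the rigorous justification of the substitution $x=-e^{i\theta}$ and of the identification of Taylor coefficients in $\theta$ with the analytically continued values $\mathcal{J}^2(-k)$. Both sides of the generating function identity are analytic in $\theta$ on $(-\pi,\pi)$, so once one verifies via Abel's theorem (and the entirety of $\mathcal{J}^2$, already noted at the beginning of this section) that they agree there as analytic functions, the claim reduces to the combinatorial bookkeeping outlined above. The tabulated values then follow immediately by plugging $m=0,1,2,\ldots$ into the closed form and invoking (\ref{Negative integers of J}) for $\mathcal{J}(1-k)$ together with the elementary values of $\eta$ at the integers.
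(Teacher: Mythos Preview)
Your proposal is correct and follows essentially the same route as the paper. For the two generating-function identities you do exactly what the paper indicates (evaluate the closed form $\sum H_n^2 x^n=(\log^2(1-x)+\mathrm{Li}_2(x))/(1-x)$ at $x=-e^{i\theta}$ and separate real and imaginary parts), and for (\ref{negative integers of J2}) your EGF manipulation---clearing the factor $1+e^{i\theta}$ and then inverting via $1/(1+e^{\theta})=\sum_m(-1)^m\eta(-m)\theta^m/m!$---is precisely the content of the paper's Theorem~\ref{Master theorem of Dirichlet series at negative integers} specialized to $a_n=H_n^2$, just written in exponential-generating-function language rather than with the $\mathrm{Li}_{j-k}(x)$ factors.
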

\begin{theorem}\label{negative values of Jm}
\textit{For positive values $n$, we can write $\mathcal{J}^m(-n)$ in terms of the values of $\mathcal{J}^{m-1},\mathcal{J}^{m-2},..,\eta$}. That is: \begin{equation}
    \begin{split}
        \mathcal{J}^{m+1}(-n)=& \{a_m(1)\mathcal{J}^m(1)\}+ \{a_{m-1}(2)\mathcal{J}^{m-1}(2)+a_{m-1}(1)\mathcal{J}^{m-1}(1)\}\\
         & \{a_{m-2}(3)\mathcal{J}^{m-2}(3)+a_{m-2}(2)\mathcal{J}^{m-2}(2)+a_{m-2}(1)\mathcal{J}^{m-2}(1)\}+\\ &\cdots+\{a_1(m)\mathcal{J}(m)+a_1(m-1)\mathcal{J}(m-1)+\cdots+a_1(1)\mathcal{J}(1)\}\\ &+\text{Values of }\eta
    \end{split}
\end{equation}
Where $a$'s are coefficients as rational numbers that depend on $n$. \end{theorem}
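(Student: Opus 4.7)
My plan is to prove the theorem by induction on $m$, built on the generating-function recurrence (\ref{harmonic power generating function in terms of lower weights}) and the Taylor-expansion-in-$\theta$ technique already exercised in this section for $\mathcal{J}(-n)$ and $\mathcal{J}^{2}(-n)$. For the base case $m=0$, I would observe that formulas (\ref{Negative integers of J}) already write every value $\mathcal{J}(-n)$ as a $\mathbb{Q}$-linear combination of Bernoulli numbers and $\log 2$; since $\log 2 = \eta(1)$ and $(4^{r}-1)B_{2r}/(2r) = \eta(1-2r)$, each $\mathcal{J}(-n)$ is already an expression in $\eta$-values with rational coefficients, so no $\mathcal{J}^{a}$ block is needed and the statement holds at $m=0$.

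For the inductive step I assume the theorem for all $\mathcal{J}^{m'}(-n')$ with $m'\le m$. I replace $m$ by $m+1$ in (\ref{harmonic power generating function in terms of lower weights}) and substitute $x = -y e^{i\theta}$ for $0<y<1$, obtaining
\begin{equation*}
\mathrm{Hl}_{0}^{m+1}(-y e^{i\theta}) \;=\; \frac{1}{1+y e^{i\theta}} \sum_{k=1}^{m+1} \binom{m+1}{k}(-1)^{k+1} \mathrm{Hl}_{k}^{m+1-k}(-y e^{i\theta}).
\end{equation*}
Both sides are analytic in $\theta$ near the origin. Expanding and extracting the coefficient of $(i\theta)^{n}/n!$ on the left gives $\sum_{r\ge 1}(-y)^{r}H_{r}^{m+1}r^{n}$, whose limit as $y\to 1^{-}$ equals $-\mathcal{J}^{m+1}(-n)$ by Abel summation together with the fact (stated at the start of Section 6) that $\mathcal{J}^{m+1}$ is entire. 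Similarly the coefficient of $(i\theta)^{j}/j!$ in $\mathrm{Hl}_{k}^{m+1-k}(-y e^{i\theta})$ tends to $-\mathcal{J}^{m+1-k}(k-j)$ for every $j\ge 0$, and the factor $(1+y e^{i\theta})^{-1}$ tends to $\tfrac{1}{2}-\tfrac{i}{2}\tan(\theta/2)$, whose nonzero Taylor coefficients at $\theta=0$ are $\tfrac{1}{2}$ at order $0$ and rational multiples of $\eta(1-2r)$ at odd orders $j=2r-1$, following the expansion displayed just after (\ref{Negative integers of J}).

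Matching the $(i\theta)^{n}/n!$ coefficients via the convolution product and simplifying the factors of $i$ yields an identity of the shape
\begin{equation*}
\mathcal{J}^{m+1}(-n) \;=\; \sum_{k=1}^{m+1}\sum_{j=0}^{n} q_{k,j}(n)\,\mathcal{J}^{m+1-k}(k-n+j),
\end{equation*}
with every $q_{k,j}(n)\in\mathbb{Q}$ (built from binomial coefficients and from values of $\eta$ at negative odd integers, themselves rational multiples of Bernoulli numbers). In each summand the superscript satisfies $m+1-k\le m$. If $k-n+j\ge 1$ the factor is already in the allowed form; if $k-n+j\le 0$, then either $m+1-k=0$ (giving an $\eta$-value directly) or $m+1-k\ge 1$, in which case the inductive hypothesis rewrites the factor as a $\mathbb{Q}$-linear combination of values $\mathcal{J}^{a'}(b')$ at positive integers $b'\ge 1$ (with $a'<m+1-k$) together with $\eta$-values. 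Composing these rewrites produces the claimed representation with rational coefficients $a_{j}(\ell)$ depending on $n$.

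The main obstacle is the passage $y\to 1^{-}$ inside the Taylor coefficients: the identity holds for $|x|<1$ but is to be evaluated at the boundary, so one must justify $\lim_{y\to 1^{-}}\sum_{r\ge 1}(-y)^{r}H_{r}^{a}r^{j-k}=-\mathcal{J}^{a}(k-j)$ even when $j-k\ge 0$ and the series diverges. I will appeal to Abel summation combined with the entireness of $\mathcal{J}^{a}$ so that the Abel mean agrees with the analytic-continuation value. Once this Tauberian bridge is granted, the remainder is a convolution-of-Taylor-series bookkeeping exercise threaded through the induction, with every numerical coefficient manifestly rational throughout.
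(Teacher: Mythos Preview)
Your proposal is correct and follows essentially the same route as the paper: the paper derives Theorem~\ref{negative values of Jm} as a special case of Theorem~\ref{Harmonic polylog theorem}, whose proof is precisely ``take Lemma~\ref{coefficient to difference coefficient}, set $x\mapsto xe^{i\theta}$, expand in $\theta$'' applied to $a_n=H_n^m$, and then let $x\to -1$---exactly your substitution $x=-ye^{i\theta}$, convolution of Taylor coefficients, and Abel limit $y\to 1^{-}$, threaded through the same induction on $m$. The only cosmetic difference is that the paper first records the identity for $\mathrm{Hl}_{-k}^{m}(x)$ at generic $|x|<1$ (equation~(\ref{power harmonic polylog in terms of polylog})) before specializing, while you specialize immediately; your explicit mention of the Abel/Tauberian step is in fact more careful than the paper's treatment.
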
 
In other words, we can write the negative values of $\mathcal{J}^m$ in terms of lower $m$, \textit{for example}:
\begin{equation}\label{Negative values of J3}\begin{split}
    \mathcal{J}^3(-k)=& \frac{1}{2}\left(3\mathcal{J}^2(1-k)-3\mathcal{J}(2-k)+\eta(3-k)\right)\\ &-\sum_{j=0}^{k-1}\binom{k}{j}\eta(j-k)\left\{3\mathcal{J}^2(1-j)-3\mathcal{J}(2-j)+\eta(3-j)\right\}\end{split}
\end{equation}
$a$'s being rational comes from the fact that the Bernoulli numbers are rational. Theorem \ref{negative values of Jm} is a particular case of the results in theorem \ref{Harmonic polylog theorem}.

The proof of (\ref{negative integers of J2}) and (\ref{Negative values of J3}) will be seen in the later section when we develop a more general result (See theorem \ref{Master theorem of Dirichlet series at negative integers}).

\section{A possible q analog}
In this section, we will find a $q$ analog of the harmonic zeta function. We will use the symbol $\phi_n$ to denote the infinite sum $$\frac{1^n xq}{1-q}+\frac{2^n x^2q^2}{1-q^2}+\frac{3^n x^3q^3}{1-q^3}+\frac{4^n x^4q^4}{1-q^4}+\cdots$$

Start by considering the expression:
$$S=\frac{xqe^{i\theta}}{1-q}+\frac{x^2q^2e^{2i\theta}}{1-q^2}+\frac{x^3q^3e^{3i\theta}}{1-q^3}+\frac{x^4q^4e^{4i\theta}}{1-q^4}+\cdots$$ Consider squaring $S$. 
If we expand as a Taylor series in $\theta$ first and square it via Cauchy product, we get \begin{align*}S^2& =\left(\phi_0+i\frac{\theta}{1!}\phi_1-\frac{\theta^2}{2!}\phi_2-i\frac{\theta^3}{3!}\phi_3+\frac{\theta^4}{4!}\phi_4+\cdots\right)^2\\ 
 &=\phi_0^2+i\frac{\theta}{1!}(\phi_0\phi_1+\phi_1\phi_0)-\frac{\theta^2}{2!}(\phi_0\phi_2+2\phi_1^2+\phi_2\phi_0)-\cdots\end{align*}

with the $\theta^n$ coefficient being $$\frac{i^n}{n!}\left\{\binom{n}{0}\phi_0\phi_{n}+\binom{n}{1}\phi_1\phi_{n-1}+\cdots+\binom{n}{n}\phi_{n}\phi_0\right\}$$
Meanwhile, if we square the series immediately we get
$$S^2=\sum_{n=2}^{\infty}x^n e^{i\theta n}\left(\frac{q}{1-q}\frac{q^{n-1}}{1-q^{n-1}}+\frac{q^2}{1-q^2}\frac{q^{n-2}}{1-q^{n-2}}+\cdots+\frac{q^{n-1}}{1-q^{n-1}}\frac{q}{1-q}\right)$$
Using the identity $$\frac{1}{(1-q^k)(1-q^{n-k})}=\frac{1}{1-q^n}\left(\frac{q^k}{1-q^k}+\frac{1}{1-q^{n-k}}\right)$$
simplifies the expression to
$$S^2=\sum_{n=2}^{\infty}\frac{x^ne^{i\theta n} q^n}{1-q^n}\left(n-1+2\left(\frac{q}{1-q}+\frac{q^2}{1-q^2}+\cdots+\frac{q^{n-1}}{1-q^{n-1}}\right)\right)$$
If we now expand the function in $\theta$ we get  $$S^2=\sum_{n\geq 0}\frac{\theta^ni^n}{n!}\left(\phi_{n+1}-\phi_n+2\sum_{k=1}^{\infty}\frac{k^nx^k q^k}{1-q^k}\left(\frac{q}{1-q}+\frac{q^2}{1-q^2}+\cdots+\frac{q^{k-1}}{1-q^{k-1}}\right)\right)$$
We can now compare coefficients of both sides to obtain \begin{equation}
    \sum_{k=2}^{\infty}\frac{k^nx^kq^k}{1-q^k}\left(\frac{q}{1-q}+\frac{q^2}{1-q^2}+\cdots+\frac{q^{k-1}}{1-q^{k-1}}\right)=\frac{1}{2}\left(\phi_n-\phi_{n+1}+\sum_{k=0}^n\binom{n}{k}\phi_k\phi_{n-k}\right)
\end{equation}
This is a $q$ analog to the harmonic zeta function. To see this, multiply both sides by $(1-q)^2$ and take the limit as $q$ to $1$ to obtain:
\begin{equation}\label{harmonic polylogarithm in terms of polylog}
    \mathrm{Hl}_{-n}(x)=\mathrm{Li}_{1-n}(x)+\sum_{k=0}^n\binom{n}{k}\mathrm{Li}_{k-n}(x)\mathrm{Li}_{1-k}(x)
\end{equation}
Taking the limit as $x$ going to $-1$ gives $\mathcal{J}$ at negative integers. We can also take the limit as $x\to i$ to study the functions $\sum_n \frac{(-1)^n H_{2n+1}}{(2n+1)^z}$ and $\sum_n \frac{H_{2n}}{n^z}$
\begin{remark}
   Note that, taking $x\to 1$ to find a closed form of $\mathcal H$ is not a valid procedure. Since $\mathrm{Li}_{-n}(x)$ is not analytic at $x=1$, taking $x\to 1$ on $\mathrm{Li}_{-n}(x)$ doesn't give the values of $\zeta(-n)$.
\end{remark}
Let us denote, for $\Theta_0=0$: $$\Theta_n=\frac{q}{1-q}+\frac{q^2}{1-q^2}+\cdots +\frac{q^n}{1-q^n}$$

\par
If we further consider the expressions:
$$T=\frac{1xq e^{i\theta}}{1-q}+\frac{2 x^2 q^2 e^{2i\theta}}{1-q^2}+\frac{3x^3 q^3e^{3i\theta}}{1-q^3}+\cdots$$
$$U=\frac{xq^2 e^{i\theta}}{(1-q)^2}+\frac{x^2 q^4 e^{2i\theta}}{(1-q^2)^2}+\frac{x^3 q^6e^{3i\theta}}{(1-q^3)^2}+\cdots$$

We can then show that, for $z=xe^{i\theta}$:
$$\sum_{n=1}^{\infty}\Theta_n ^2z^n=\frac{S^2+U-T+S}{1-z}$$
 which, by expanding in $\theta$ gives the $q$ analog of $\mathcal{J}^2(-n)$. i.e. \begin{equation}\label{Theta 2}
    \sum_{k=1}^{\infty}\Theta_{k}^2k^n x^k=2\kappa_n+\psi_n+\sum_{k=0}^n\binom{n}{k}\mathrm{Li}_{k-n}(x)\left(2\kappa_k+\psi_k\right)
\end{equation}
where
\begin{equation}
    \kappa_k=\sum_{n=2}^{\infty}\frac{n^kx^n q^n}{1-q^n}\Theta_{n-1} \quad ,\quad 
    \psi_k=\sum_{n=1}^{\infty}\frac{n^k x^n q^{2n}}{(1-q^n)^2}
\end{equation}
An alternative derivation could be seen in theorem \ref{Master theorem of Dirichlet series at negative integers}.
\par

This kind of study of $q$ extension has interesting number theoretic implications. If we consider $\Theta_n$,
 expanding this in $q$ gives $$\Theta_n=c_1(n)q+c_2(n)q^2+c_3(n)q^3+\cdots$$ Then $c_m(n)$ denotes the amount of divisors of $m$ that is less than or equal to $n$. For example $c_{12}(5)=4$ because $12$ has divisors $1,2,3,4,6,12$ and four of them are less than or equal to $5$.
 Denote $\sigma_0^{-}(n)=\sum_{d|n}(-1)^d$ and $\sigma_0(n)=\sum_{d|n} 1$, i.e. $\sigma_0(n)$ counts the number of divisors of $n$.
Then we can show that:
$$\sum_{n=1}^m(-1)^n c_m(n)=\begin{cases}\displaystyle
    \text{Counts the number of even divisors of $m$ when $m$ is even} \\
    \displaystyle \text{$\sigma_0(m)$, when $m$ odd}
\end{cases}$$
if we denote $\sigma_k^{-}(n)=\sum_{d|n}(-1)^d d^k $, then $$\sum_{n=1}^m(-1)^n n c_m(n)=\begin{cases}
    \displaystyle \frac{\sigma_1^-(m)}{2}-\frac{\sigma_0^-(m)}{4}+\frac{2m+1}{4}\sigma_0(m) \, ,\, m \text{ even}\\ 
    \displaystyle
    -\frac{\sigma_1(m)}{2}-\frac{m}{2}\sigma_0(m) \, ,\, m \text{ odd} 
\end{cases}$$
More generally, we can use theorem \ref{Master theorem of Dirichlet series at negative integers} to show that 
\begin{equation}\label{cm formula}\begin{split}\sum_{n=1}^m(-1)^n n^k&  c_m(n)= \sigma_k^-(m)+\eta(-k)\left\{\sigma_0(m)-\sigma_0^-(m)\right\}\\ &-\sigma_0(m)\left(1^k-2^k+3^k-\cdots+(-1)^{m+1}m^k\right)\\ &-\left\{\binom{k}{1}\eta(1-k)\sigma_1^-(m)+\binom{k}{2}\eta(2-k)\sigma_2^-(m)+\cdots+\binom{k}{k}\eta(0)\sigma_k^-(m)\right\}\end{split}\end{equation}
Where $\eta$ denotes the Dirichlet eta function. Note that when $m$ is odd, we can write (\ref{cm formula}) only in terms of the sum of divisors functions $\sigma_k(m)$. This is because $\sigma_k^{-}(m)=-\sigma_k(m)$ when $m$ is odd. We will not dive too deep into such results in this paper.

\section{The harmonic polylogarithm}
We will utilize a very general expression of the relation between two power series, and show that for positive integer $k$, the harmonic polylogarithm $\mathrm{Hl}_{-k}^m(x)$ can be written purely in terms of a combination of harmonic polylogarithms with lower $m$. \par
\begin{theorem}\label{Master theorem of Dirichlet series at negative integers}
 Let $$\begin{cases}\Phi_k(x)=a_1 1^kx+a_2 2^k x^2+a_3 3^kx^3+\cdots \\
 \varphi_k(x)=b_1 1^kx+b_2 2^k x^2+b_3 3^kx^3+\cdots\end{cases}$$
 Where $a_0=0$ and $b_n=a_n-a_{n-1}$. Then $\Phi$ and $\varphi$ has the relation
 $$\Phi_k(x)=\varphi_k(x)+\sum_{j=0}^k \binom{k}{j}\mathrm{Li}_{j-k}(x)\varphi_j(x)$$\end{theorem}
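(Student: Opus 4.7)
The plan is to interpret the identity as the image, under iteration of the differential operator $xD := x\tfrac{d}{dx}$, of a base identity with $k = 0$. Since $(xD)x^n = n x^n$, we have $(xD)^k \Phi_0(x) = \Phi_k(x)$ and likewise $(xD)^k \varphi_0(x) = \varphi_k(x)$. Applying Lemma~\ref{coefficient to difference coefficient} to the sequence $(a_n)$ immediately furnishes the base identity
\begin{equation*}
\Phi_0(x) = \frac{\varphi_0(x)}{1-x}.
\end{equation*}

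Thus the problem reduces to evaluating $(xD)^k\bigl[(1-x)^{-1}\varphi_0(x)\bigr]$. The key structural observation is that $xD$ is a derivation: $xD(fg) = (xDf)g + f(xDg)$, so it obeys the binomial-style iterated Leibniz rule
\begin{equation*}
(xD)^k(fg) = \sum_{j=0}^k \binom{k}{j}(xD)^j f \cdot (xD)^{k-j} g.
\end{equation*}
Taking $f = (1-x)^{-1}$ and $g = \varphi_0(x)$, I would compute $(xD)^j (1-x)^{-1} = \sum_{n\geq 1} n^j x^n = \mathrm{Li}_{-j}(x)$ for $j \geq 1$, with the $j=0$ term just $(1-x)^{-1}$, while $(xD)^{k-j}\varphi_0(x) = \varphi_{k-j}(x)$. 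This produces
\begin{equation*}
\Phi_k(x) = \frac{\varphi_k(x)}{1-x} + \sum_{j=1}^k \binom{k}{j} \mathrm{Li}_{-j}(x)\,\varphi_{k-j}(x).
\end{equation*}

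The final step is a re-indexing $j \mapsto k-j$ in the sum, combined with the identities $\mathrm{Li}_0(x) = x/(1-x)$ and $(1-x)^{-1} = 1 + x/(1-x)$, which together convert the isolated $\varphi_k(x)/(1-x)$ term into $\varphi_k(x) + \mathrm{Li}_0(x)\varphi_k(x)$, exactly supplying the missing $j = k$ summand of the target expression and the standalone $\varphi_k(x)$ outside the sum. This yields the claimed
\begin{equation*}
\Phi_k(x) = \varphi_k(x) + \sum_{j=0}^k \binom{k}{j}\mathrm{Li}_{j-k}(x)\,\varphi_j(x).
\end{equation*}

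The main obstacle is essentially book-keeping around the endpoints of the summation: the asymmetry between $\mathrm{Li}_0(x) = x/(1-x)$ and $(1-x)^{-1}$ means one has to be careful in matching the $j = 0$ term of the Leibniz expansion to the $j = k$ term of the target. Beyond that, the conceptual content is just the derivation property of $xD$ together with the base case from Lemma~\ref{coefficient to difference coefficient}.
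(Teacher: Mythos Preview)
Your proof is correct. Both your argument and the paper's rest on the base identity $\Phi_0(x)=(1-x)^{-1}\varphi_0(x)$ from Lemma~\ref{coefficient to difference coefficient}, and both then extract the $n^k$-weighted version; the difference is purely in how that extraction is performed. The paper substitutes $x\mapsto xe^{i\theta}$ and reads off the $\theta^k$ coefficient of the resulting Taylor expansion, which amounts to packaging all values of $k$ into a single exponential generating function and then using the Cauchy product of the two $\theta$-expansions. You instead apply the operator $(xD)^k$ directly and invoke the Leibniz rule for the derivation $xD$. These are equivalent formalisms---the coefficient of $(i\theta)^k/k!$ in $F(xe^{i\theta})$ is exactly $(xD)^kF(x)$---so the underlying combinatorics (the binomial convolution) is identical. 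Your route is arguably more self-contained and avoids the detour through complex exponentials; the paper's route has the advantage of fitting the $e^{i\theta}$ machinery already developed in Sections~6--7 for computing $\mathcal{J}^m$ at negative integers. Your handling of the $j=0$ endpoint via $(1-x)^{-1}=1+\mathrm{Li}_0(x)$ is precisely the bookkeeping needed and is done correctly.
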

For example $a_n=H_n$ gives (\ref{harmonic polylogarithm in terms of polylog}), $a_n=H_n^2$ gives (\ref{negative integers of J2}), $a_n=H_n^3$ gives (\ref{Negative values of J3}) and $a_n=\Theta_n^2$ gives (\ref{Theta 2})
\begin{proof}Take lemma \ref{coefficient to difference coefficient}, set $x$ as $xe^{i\theta}$ and expand in $\theta$. \end{proof}
\begin{theorem}\label{Harmonic polylog theorem}
    For positive integers $m$, the infinite sum $$H_1^m 1^kx+H_2^m 2^kx^2+H_3^m 3^kx^3+H_4^m 4^kx^4+\cdots$$ can be purely written in terms of combinations of harmonic polylogarithms of lower $m$, governed by the equation: \begin{equation}\label{power harmonic polylog in terms of polylog}\mathrm{Hl}_{-k}^m(x)=\sum_{j=1}^m\binom{m}{j}(-1)^{j+1}\left(\mathrm{Hl}_{j-k}^{m-j}(x)+\sum_{l=0}^k\binom{k}{l}\mathrm{Li}_{l-k}(x)\mathrm{Hl}_{j-l}^{m-j}(x)\right)\end{equation}
\end{theorem}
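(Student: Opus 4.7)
The plan is to derive this directly from Theorem \ref{Master theorem of Dirichlet series at negative integers} (the master theorem) by choosing the sequence $a_n = H_n^m$. With this choice, $\Phi_k(x) = \sum_{n\geq 1} a_n n^k x^n = \mathrm{Hl}_{-k}^m(x)$, so the entire task reduces to computing $\varphi_k(x) = \sum_{n\geq 1} (a_n - a_{n-1}) n^k x^n$ explicitly and then substituting into the master identity.

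First I would handle the difference $H_n^m - H_{n-1}^m$. Writing $H_n = H_{n-1} + 1/n$ and applying the binomial theorem yields
\begin{equation*}
H_n^m - H_{n-1}^m = \sum_{j=1}^m \binom{m}{j} \frac{(-1)^{j+1}}{n^j} H_n^{m-j},
\end{equation*}
which is the same binomial identity already used in the proof of Theorem~4 (the Dirichlet regularization of $\mathcal{J}^m$ at $0$). Plugging this into the definition of $\varphi_k(x)$ and interchanging the two finite/infinite summations gives
\begin{equation*}
\varphi_k(x) = \sum_{j=1}^m \binom{m}{j}(-1)^{j+1} \sum_{n\geq 1} n^{k-j} H_n^{m-j} x^n = \sum_{j=1}^m \binom{m}{j}(-1)^{j+1}\,\mathrm{Hl}_{j-k}^{m-j}(x).
\end{equation*}
Note that the condition $a_0 = H_0^m = 0$ required by the master theorem is satisfied (with the standard convention $H_0 = 0$), so its hypotheses apply.

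Next I would substitute this expression for $\varphi_k$, and also for $\varphi_l$ with $0 \leq l \leq k$, into the master identity
\begin{equation*}
\mathrm{Hl}_{-k}^m(x) = \varphi_k(x) + \sum_{l=0}^k \binom{k}{l} \mathrm{Li}_{l-k}(x)\,\varphi_l(x).
\end{equation*}
Swapping the order of summation between $j$ and $l$ and factoring out the common coefficient $\binom{m}{j}(-1)^{j+1}$ yields exactly equation (\ref{power harmonic polylog in terms of polylog}). This is a purely algebraic rearrangement, so no convergence issues arise beyond those already handled by the master theorem.

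The argument has no real obstacle: every step is either the binomial identity for $H_n^m - H_{n-1}^m$ or the direct invocation of Theorem \ref{Master theorem of Dirichlet series at negative integers}. The only care point is bookkeeping the double sum cleanly so that the final expression groups into the form $\sum_j \binom{m}{j}(-1)^{j+1}\bigl[\,\cdot + \sum_l \cdots\bigr]$ displayed in the statement, rather than the more symmetric but less illuminating $\sum_{j,l}$ form. As a sanity check, I would verify the $m=1$ case recovers (\ref{harmonic polylogarithm in terms of polylog}) and the $m=2$ case recovers (\ref{negative integers of J2}) after specializing to $x \to -1$, which confirms the sign conventions and the range of the outer sum (starting at $j=1$, since the $j=0$ contribution would correspond to the original $\Phi_k$ itself and is absent in $\varphi_k$).
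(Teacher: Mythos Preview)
Your proposal is correct and follows exactly the paper's approach: the paper's proof is the terse one-liner ``Set $a_n=H_n^m$ gives the above formula,'' and you have simply filled in the details (the binomial expansion of $H_n^m-H_{n-1}^m$, the resulting expression for $\varphi_k$, and the swap of summations) that the paper omits. Your bookkeeping and sanity checks are fine; there is nothing to add or correct.
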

\begin{proof}
Set $a_n=H_n^m$ gives the above formula. We see that for all positive integer $m$, we can write $\mathrm{Hl}^m_{-k}(x)$ in terms of $\mathrm{Hl}^{m-1}_{-k}(x), \mathrm{Hl}^{m-2}_{-k}(x),\cdots , \mathrm{Hl}_{-k}(x),\mathrm{Li}_{-k}(x)$. This means that we can write $\mathrm{Hl}^m_{-k}(x)$ merely in terms of polylogs $\mathrm{Li}_{-k}(x)$ and $\mathrm{Hl}^{a}_{k}(x)$ for positive $k$, $k+a\leq m$.\end{proof}
Though (\ref{power harmonic polylog in terms of polylog}) gives an explicit expression, is a very complicated form and I do not use this to compute anything in practice. It is more practical to put special cases in theorem \ref{Master theorem of Dirichlet series at negative integers} and write them down individually.

\begin{remark} (\ref{value of powerharmoniceta at 0}) is a particular case of the above.
\end{remark}
\section{Asymptotics}

Using (\ref{asymptotic of a harmonic sum}) we can deduce further asymptotic formulas for different kinds of sums. Again, the little o notation will be used. 

\begin{theorem}\label{Asymptotic of sums of alternating Harmonic powers} For all integers $m>0$:
\begin{equation}\begin{split}
    \sum_{n=1}^N(-1)^{n+1}H_n^m=&\frac{(-1)^{N+1}}{2}H_{N+1}^m+\sum_{k=0}^{m-1}\binom{m}{k}\frac{(-1)^{m-k-1}}{2}\powerharmonicetafunction{k}{m-k}+o(1)
    \end{split}
\end{equation}

\textit{Examples:}
\begin{align*}
    \sum_{n=1}^N(-1)^{n+1}H_n & =\frac{(-1)^{N+1}}{2}H_{N+1}+\frac{\ln(2)}{2}\\ 
    \sum_{n=1}^N(-1)^{n+1}H_n^2 & =\frac{(-1)^{N+1}}{2}H_{N+1}^2+\frac{\zeta(2)}{4}-\frac{\ln^2(2)}{2}\\  \sum_{n=1}^N(-1)^{n+1}H_n^3 & =\frac{(-1)^{N+1}}{2}H_{N+1}^3+\frac{9}{16}\zeta(3)-\frac{3}{4}\zeta(2)\ln(2)+\frac{\ln^3(2)}{2}
\end{align*}

\begin{proof}
    Starting with summation by parts, we get
    \[\begin{split}\sum_{n=1}^N(-1)^{n+1}H_n = &\frac{1+(-1)^{N+1}}{2}H_{N+1}^m-\frac{m}{2}\sum_{n=1}^{N+1}\frac{H_n^{m-1}}{n}-\sum_{k=0}^{m-2}\binom{m}{k}\frac{(-1)^{m+k+1}}{2}\pHz{k}{m-k} \\ &+ \sum_{k=0}^{m-1}\binom{m}{k}\frac{(-1)^{m+k+1}}{2}\powerharmonicetafunction{k}{m-k}+o(1)\end{split}\]
    Using (\ref{asymptotic of a harmonic sum}) proves our desired closed form
\end{proof}
\end{theorem}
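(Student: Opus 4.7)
The plan is to apply Abel summation with $a_n = (-1)^{n+1}$ and $b_n = H_n^m$, exploiting the clean decomposition of the partial sums $A_n := \sum_{k=1}^n (-1)^{k+1}$ as $A_n = \tfrac12 + \tfrac12(-1)^{n+1}$. Substituting this split into $\sum_{n=1}^N a_n b_n = A_N b_N - \sum_{n=1}^{N-1} A_n(b_{n+1} - b_n)$ produces a telescoping piece and an alternating piece. The telescoping piece $-\tfrac12 \sum_{n=1}^{N-1}(H_{n+1}^m - H_n^m)$ collapses to $\tfrac12(1 - H_N^m)$, and combining with the boundary term $A_N H_N^m$ gives $\tfrac{(-1)^{N+1}}{2} H_N^m + \tfrac12$.

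The alternating piece $-\tfrac12 \sum_{n=1}^{N-1}(-1)^{n+1}(H_{n+1}^m - H_n^m)$ is the heart of the matter. I would expand the forward difference via the binomial identity derived from $H_n = H_{n+1} - 1/(n+1)$,
\[H_{n+1}^m - H_n^m = -\sum_{k=0}^{m-1}\binom{m}{k}\frac{(-1)^{m-k}}{(n+1)^{m-k}} H_{n+1}^k,\]
and then re-index $j = n+1$ to recognize each resulting inner sum as $\powerharmonicetafunction{k}{m-k}$ minus its $j=1$ term, which equals $1$ since $H_1^k = 1$. Sending $N \to \infty$ turns the alternating piece into $\sum_{k=0}^{m-1}\binom{m}{k}\frac{(-1)^{m-k+1}}{2}\powerharmonicetafunction{k}{m-k} - \tfrac12 \sum_{k=0}^{m-1}\binom{m}{k}(-1)^{m-k+1} + o(1)$. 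The second constant sum evaluates to $-\tfrac12$ via $\sum_{k=0}^{m-1}\binom{m}{k}(-1)^{m-k} = -1$ (which follows from $(1-1)^m = 0$ minus the isolated $k=m$ term $1$, valid for $m \ge 1$), and this cancels the leftover $+\tfrac12$ from the telescoping computation. Replacing $H_N^m$ by $H_{N+1}^m$ in the boundary costs only $O(\log^{m-1}(N)/N) = o(1)$, and noting $(-1)^{m-k+1} = (-1)^{m-k-1}$ matches the stated formula exactly.

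The main obstacle is purely combinatorial bookkeeping: tracking the nested alternating signs $(-1)^{m-k}$, the reindexing shift, and the boundary corrections from $j=1$. Convergence of $\powerharmonicetafunction{k}{m-k}$ for $k \le m-1$ is not an issue, being absolute for $m - k \ge 2$ and a consequence of Dirichlet's test for $m - k = 1$ since $H_j^k / j$ is eventually monotonically decreasing to $0$. My derivation bypasses the author's sketched use of (\ref{asymptotic of a harmonic sum}) entirely, but the two routes are algebraically equivalent: using the alternative forward-difference form $H_{n+1}^m - H_n^m = \sum_{k=0}^{m-1}\binom{m}{k} H_n^k/(n+1)^{m-k}$ in the telescoping part instead introduces $\mathcal{H}^{k+1}(m-k)$ and $\sum H_n^{m-1}/n$ terms, which (\ref{asymptotic of a harmonic sum}) then contracts back to the same closed form.
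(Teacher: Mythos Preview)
Your proof is correct. Both you and the paper apply Abel summation with the same split $A_n=\tfrac12+\tfrac12(-1)^{n+1}$, so the overall architecture is the same. The difference is in how the non-alternating half is handled: the paper expands $H_{n+1}^m-H_n^m$ in powers of $H_n$ there as well, which produces the divergent sum $-\tfrac{m}{2}\sum_{n}\tfrac{H_n^{m-1}}{n}$ together with $\mathcal{H}^k(m-k)$ terms, and then invokes the asymptotic (\ref{asymptotic of a harmonic sum}) to see that these cancel against the growing boundary $\tfrac12 H_{N+1}^m$. You instead observe that $-\tfrac12\sum_{n=1}^{N-1}(H_{n+1}^m-H_n^m)$ telescopes outright to $\tfrac12(1-H_N^m)$, so no divergent or $\mathcal{H}$-type terms ever appear and (\ref{asymptotic of a harmonic sum}) is not needed. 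Your route is therefore shorter and self-contained; the paper's route has the minor advantage of exhibiting explicitly why the $\mathcal{H}^k$ contributions vanish, tying the result back to the machinery of Section~4.
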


Below we show the asymptotic behavior of $\sum_n H^m_n$.
\begin{theorem}\label{Asymptotic of sums of Harmonic powers}
    \begin{equation}\begin{split}
    \sum_{n=1}^{N}H_n& = (N+1)(H_{N+1}-1) \\ 
    \sum_{n=1}^{N}H_n^2& = (N+1)H^2_{N+1}-(2N+3)H_{N+1}+2N+2
    \end{split}\end{equation}
and for integers $M\geq 3$
\begin{equation}\label{asymptotic of sum of power harmonic number}\begin{split}
\sum_{n=1}^NH_n^M= & (N+1)H_{N+1}^M+(-1)^M M!\Bigg\{\sum_{m=3}^{M-1}\frac{(-1)^m}{m!}H_{N+1}^m\left(N+\frac{3}{2}\right)\\  &+N+1+\left(\frac{N}{2}+\frac{3}{4}\right)H_{N+1}^2- \left(N+\frac{3}{2}\right)H_{N+1}-\sum_{m=3}^M\frac{\zeta(m-1)}{m!}\left(\frac{m}{2}-1\right) \\ & +\sum_{m=4}^{M} \frac{1}{m!}\sum_{k=1}^{m-3}\left[\binom{m-1}{k}\frac{m}{2}-\binom{m}{k}\right](-1)^{k+1}\mathcal{H}^k (m-k-1)\Bigg\}+o(1)
\end{split}
\end{equation}

\textit{Examples:}
\begin{equation}\begin{split}
    \sum_{n=1}^N H_n^3 = &  (N+1) H_{N+1}^3-6N-6-\left(3N+\frac{9}{2}\right)H_{N+1}^2+ (6N+9)H_{N+1}+\frac{\zeta(2)}{2}+o(1) \\
    \sum_{n=1}^N H_n^4  = &  (N+1)H_{N+1}^4 -(4N+6)H_{N+1}^3 +24N+24  \\ & (12N+18)H_{N+1}^2 -(24N+36)H_{N+1}+3\zeta(3)-2\zeta(2)+o(1) \\ 
    \sum_{n=1}^N H_{n}^5 = & (N+1)H_{N+1}^5 -5\left(N+\frac{3}{2}\right)H_{N+1}^4+(20N+30)H_{N+1}^3-(60N+90)H_{N+1}^2 \\ &+(120N+180)H_{N+1}-120N-120+\frac{33}{2}\zeta(4)-15\zeta(3)+10\zeta(2)+o(1)
    \end{split}
\end{equation}
\end{theorem}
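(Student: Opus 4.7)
The plan is to prove the stated asymptotic by induction on $M$, using Abel summation in the spirit of the derivation of (\ref{asymptotic of a harmonic sum}). First I would handle the base cases $M=1$ and $M=2$ directly: a single Abel step on $\sum_{n=1}^N H_n$ telescopes to $(N+1)(H_{N+1}-1)$, and an analogous one-step calculation combined with the classical identity $\sum_{k=1}^{N+1}H_k/k=(H_{N+1}^2+H_{N+1}^{(2)})/2$ yields the exact $M=2$ formula. These base cases already expose why the leading term is $(N+1)H_{N+1}^M$ rather than $NH_{N+1}^M$.

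For the inductive step with $M\geq 3$, start from the Abel identity
\[
T_M(N):=\sum_{n=1}^N H_n^M \;=\; NH_{N+1}^M \;-\; \sum_{n=1}^N n\bigl(H_{n+1}^M - H_n^M\bigr),
\]
expand the telescoping difference as $H_{n+1}^M-H_n^M=\sum_{k=0}^{M-1}\binom{M}{k}(-1)^{M+k+1}H_{n+1}^k/(n+1)^{M-k}$, and split $n/(n+1)^{M-k}=1/(n+1)^{M-k-1}-1/(n+1)^{M-k}$. After reindexing $j=n+1$, this writes $T_M(N)$ as a weighted combination of partial sums $\sum_{j=2}^{N+1}H_j^k/j^p$ which fall into three classes. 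For $p\geq 2$ each converges to $\pHz{k}{p}-1$; the two with $p=1$ (arising from $k=M-2$ in the first split and $k=M-1$ in the second) are expanded by (\ref{asymptotic of a harmonic sum}); and the unique $p=0$ case (arising only from $k=M-1$ in the first split, with coefficient $-M$) is exactly $T_{M-1}(N+1)-1$, on which the induction hypothesis acts.

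Collecting contributions produces a one-step recursion of the form
\[
T_M(N) \;=\; (N+1)H_{N+1}^M \;-\; M\,T_{M-1}(N+1) \;+\; F_M(N) \;+\; o(1),
\]
where $F_M(N)$ is an explicit linear combination of $H_{N+1}^{M-1}$, $\zeta(M)$, $\zeta(M-1)$, and the convergent values $\pHz{k}{j}$ for $1\leq k\leq M-3$. The upgrade from $N$ to $N+1$ in the leading term appears because (\ref{asymptotic of a harmonic sum}) applied to $\sum_j H_j^{M-1}/j$ produces an extra $H_{N+2}^M/M = H_{N+1}^M/M + o(1)$ which, after multiplication by the coefficient $+M$ from the $k=M-1$ piece, supplies exactly the missing $H_{N+1}^M$. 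Unfolding the recursion and using $H_{N+2}^m=H_{N+1}^m+o(1)$ at each shift generates the overall $(-1)^MM!$ as a product of the $-M$ coefficients across $M-2$ iterations, while the $(N+\tfrac32)$ factors emerge from combining the $(N+1)H_{N+1}^m$ produced at one level with the $+\tfrac12H_{N+1}^m$ shift produced by the $k=M-2$ application of (\ref{asymptotic of a harmonic sum}) at the previous level.

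The main obstacle is the coefficient bookkeeping: verifying that, after the full unfolding, the coefficients of $\pHz{k}{m-k-1}$ and $\zeta(m-1)$ collapse to the stated $\binom{m-1}{k}\tfrac{m}{2}-\binom{m}{k}$ and $\tfrac{m}{2}-1$ respectively (with, in particular, the seemingly extra $\zeta(M)$ and $\pHz{M-2}{2}$ contributions cancelling against each other via Euler's relation $\pHz{1}{2}=2\zeta(3)$ and its higher analogues). The cleanest route is to establish the one-step recursion with explicit coefficients and then verify directly, via a short binomial identity, that the proposed closed form satisfies that same recursion; induction on $M$ then closes the argument.
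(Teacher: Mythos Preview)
Your plan is essentially the paper's own proof: derive a one-step reduction formula for $T_M(N)=\sum_{n\le N}H_n^M$ via summation by parts together with (\ref{asymptotic of a harmonic sum}), and then iterate. The paper records the recursion as
\[
T_m(N)=(N+1)H_{N+1}^m-\tfrac{m}{2}H_{N+1}^{m-1}-m\,T_{m-1}(N)+(-1)^{m-1}\zeta(m-1)\bigl(\tfrac{m}{2}-1\bigr)+\sum_{k=1}^{m-3}\Bigl[\tbinom{m-1}{k}\tfrac{m}{2}-\tbinom{m}{k}\Bigr](-1)^{m-k-1}\mathcal{H}^k(m-k-1)+o(1),
\]
which is your $T_M=(N+1)H_{N+1}^M-M\,T_{M-1}+F_M+o(1)$ after absorbing the shift $T_{M-1}(N+1)=T_{M-1}(N)+H_{N+1}^{M-1}$. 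Instead of unfolding or verifying by induction, the paper multiplies both sides by $(-1)^m/m!$ and sums over $m=3,\dots,M$, which turns the left side into a telescoping sum and delivers (\ref{asymptotic of sum of power harmonic number}) directly; you may find that cleaner than tracking the iterated coefficients by hand.

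One correction to your last paragraph: the ``extra'' $\zeta(M)$ and $\mathcal{H}^{M-2}(2)$ (and more generally all the $\mathcal{H}^{j}(M-j)$) do \emph{not} cancel via Euler's identity $\mathcal{H}(2)=2\zeta(3)$ or any analogue. They cancel coefficient-wise: the contribution of $\mathcal{H}^{j}(M-j)$ coming from the direct $p\ge 2$ piece of the second split is exactly $-\binom{M}{j}(-1)^{j+M}$, while the contribution from applying (\ref{asymptotic of a harmonic sum}) to $\sum_j H_j^{M-1}/j$ is $+\binom{M}{j}(-1)^{j+M}$, and similarly $\zeta(M)$ receives $\pm(-1)^M$ from those two sources. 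So no nonlinear identity is needed; the bookkeeping is purely binomial, and your proposed route of checking that the closed form satisfies the one-step recursion will go through without appealing to any Euler sum evaluation.
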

 The first two sums are stated for completeness, they can be derived via summation by parts, we focus on the general case.
\begin{proof}
    Using summation by parts and (\ref{asymptotic of a harmonic sum}), we can obtain the reduction formula
\begin{equation}\label{raw asymptotic power harmonic at 0}\begin{split}
    \sum_{n=1}^N H_n^m= & (N+1)H_{N+1}^m-\frac{m}{2}H_{N+1}^{m-1}-m\sum_{n=1}^N H_n^{m-1} + (-1)^{m-1}\zeta(m-1)\left(\frac{m}{2}-1\right) \\ &+\sum_{k=1}^{m-3}\left[\binom{m-1}{k}\frac{m}{2}-\binom{m}{k}\right](-1)^{m-k-1}\mathcal{H}^k(m-k-1)+o(1)
\end{split}\end{equation}
for $m\geq 3$. Multiply both sides by $\frac{(-1)^m}{m!}$ and summing both sides from $m=3$ to $M$ gives the theorem.
\end{proof}
One can see that (\ref{asymptotic of sum of power harmonic number}) is a very complicated form. To see the terms more clearly, note that we have $(N+1)H^M_{N+1}$ being the dominant term, $(N+\frac{3}{2})H^{M-1}_{N+1},\cdots,(N+\frac{3}{2})H^{3}_{N+1},(\frac{N}{2}+\frac{3}{4})H_{N+1}^2,-(N+\frac{3}{2})H_{N+1},N+1$ and the constant terms being the two summations, all multiplied by $(-1)^M M!$.

\section{Behavior at poles}
In this section, we will first study the residues of $\mathcal{H}^m$, and use the results to study higher-order poles.
I will use $H^m(n)$ to denote the residue of $\mathcal{H}^m(z)$ at the point $z=n$. We know that from theorem \ref{Distribution of poles of H} that $\mathcal{H}^m$ has its poles at $1,0,-1,-2,-3,\cdots$. If we were to list out all the residues using (\ref{Asymp. Harmonic}), we get:

\begin{equation}\label{Residues}\begin{split}
       H^{m+1}(n)& = \gamma H^m(n)+\frac{1}{2}H^m(n+1)+\sum_{k=1}^N\zeta(1-2k)H^{m}(2k+n) \\
       H^{m+1}(1) & =\gamma H^{m}(1) \\
       H^{m+1}(0) &=\gamma H^m(0)+\frac{1}{2} H^m(1)\\
       H^{m+1}(-1) & =\gamma H^m(-1)+\frac{1}{2}H^m(0)+\zeta(-1)H^m(-1)\\
       H^{m+1}(-2) & = \gamma H^m(-2)+\frac{1}{2} H^m(-1) +\zeta(-1) H^m (0) \\ 
       H^{m+1}(-3) & = \gamma H^m (-3)+\frac{1}{2}H^m (-2)+\zeta(-1)H^m(-1)+\zeta(-3)H^m(1) \\
       H^{m+1}(-4) & = \gamma H^m (-4)+\frac{1}{2}H^m (-3)+\zeta(-1)H^m(-2)+\zeta(-3)H^m(0) \\
       H^{m+1}(-5) & = \gamma H^m (-5)+\frac{1}{2}H^m (-4)+\zeta(-1)H^m(-3)+\zeta(-3)H^m(-1)+\zeta(-5)H^m(1) \\
       \vdots
\end{split}\end{equation}
where $N$ is a suitably chosen constant depending on the point of residue. Note the fact that $\zeta(1-n)=(-1)^{n-1}\frac{B_n}{n}$.
\par
Upon listing out the residues, we find out that we can write $H^m(1-k)$ in the form \begin{equation}\label{an}
   \gamma^m a_k(0)+\sum_{n=1}^k \binom{m}{n}\gamma^{m-n}a_k(n)
\end{equation}

We can deduce the reduction formula for $a_n$ if we solve the system of reduction formulas in (\ref{Residues}) one by one. Its reduction formula is:

\begin{equation}\label{reduction of an}
    a_{n}(\ell +1)=\sum_{k=1}^{n-\ell} (-1)^k \zeta(1-k)a_{n-k}(\ell)
\end{equation}
With the initial condition $a_n(0)=\begin{cases} 0\, , \, if \, n\neq 0 \\ 1 \, , \, if \, n=0    
\end{cases}$. \

Upon simplifying we get the following formulas
\begin{align}
    a_{2n+1}(2)&=\zeta(1-2n) \\
    a_{2n}(2) & =\zeta(-1)\zeta(3-2n)+\zeta(-3)\zeta(5-2n)+\cdots +\zeta(3-2n)\zeta(-1)\\
    a_{2n+1}(3)& =\frac{3}{2}a_{2n}(2)\\
    a_{n}(n) &=\frac{1}{2^n}
\end{align} Some results are tabulated below:

\begin{table}[h]\caption{Residues $H^m(k)$}\vspace{3mm}\label{tab2}
\begin{tabular}{| c |l |} \hline 
$k$  & $H^m(k)$  \\ 
 \hline
$1$ & $ \gamma^m$  \\    \hline
$0$ & $ \frac{m}{2}\gamma^{m-1}$ \\  \hline
$-1$ & $ \gamma^m\left\{\binom{m}{2}\frac{1}{4\gamma^2 }+\binom{m}{1}\frac{\zeta(-1)}{\gamma}\right\}$ \\   \hline
$-2$ & $ \gamma^m \left\{\binom{m}{3}\frac{1}{8 \gamma^3}+\binom{m}{2}\frac{\zeta(-1)}{\gamma^2}\right\}$ \\ \hline
$-3$ & $ \gamma^m \left\{\binom{m}{4}\frac{1}{16 \gamma^4}+\binom{m}{3}\frac{3 \zeta(-1)}{4 \gamma^3}+\binom{m}{2}\frac{\zeta(-1)^2}{\gamma^2}+\binom{m}{1}\frac{\zeta(-3)}{\gamma}\right\}$ \\  \hline
$-4$ & $ \gamma^m \left\{\binom{m}{5}\frac{1}{32 \gamma^5}+\binom{m}{4}\frac{\zeta(-1)}{2\gamma^4}+\binom{m}{3}\frac{3\zeta(-1)^2}{2\gamma^3}+\binom{m}{2}\frac{\zeta(-3)}{\gamma^2}\right\}$ \\ \hline \end{tabular}
\end{table}

\begin{table}[h]
\caption{Values of $a_n$}\vspace{3mm}\label{tab3}
\begin{tabular}{|c|c|c|c|c|} \hline 
    $n$ & $a_n(1)$ & $a_n(2)$ & $a_n(3)$ & $a_n(4)$ \\ \hline
   $1$&  $\frac{1}{2}$ &  0 & 0 & 0 \\ \hline   
   $2$&  $\zeta(-1)$ &  $\frac{1}{4}$ & 0 & 0 \\ \hline
   $3$&  0&  $\zeta(-1)$& $\frac{1}{8}$ & 0\\ \hline
   $4$&  $\zeta(-3)$ & $\zeta(-1)^2$ & $\frac{3}{4}\zeta(-1)$ & $\frac{1}{16}$\\ \hline
   $5$&  0& $\zeta(-3)$ & $\frac{3}{2}\zeta(-1)^2$& $\frac{1}{2}\zeta(-1)$ \\ \hline
   $6$&  $\zeta(-5)$ & $2\zeta(-1)\zeta(-3)$ & $\zeta(-1)^3+\frac{3}{4}\zeta(-3)$ & $\frac{3}{2}\zeta(-1)^2$ \\ \hline
   $7$&  0& $\zeta(-5)$ & $3\zeta(-1)\zeta(-3) $ & $2\zeta(-1)^3+\frac{1}{2}\zeta(-3)$\\ \hline
   $8$&  $\zeta(-7)$ & $2\zeta(-1)\zeta(-5)+\zeta(-3)^2$ & $\frac{3}{4}\zeta(-5)+3\zeta(-1)^2\zeta(-3)$ &$\zeta(-1)^4+3\zeta(-1)\zeta(-3)$ \\ \hline
   $9$&  0& $\zeta(-7)$ & $3\zeta(-1)\zeta(-5)+\frac{3}{2}\zeta(-3)^2$ & $\frac{1}{2}\zeta(-5)+6\zeta(-1)^2\zeta(-3)$\\ \hline 
\end{tabular}
\footnotetext[1]{The upper triangular section are all zeros}
\end{table}

The following formal power series captures $a_n$ well. 
\begin{lemma}\label{formal power series}
    Consider the formal power series \[f(z)=\sum_{n=1}^{\infty} (-1)^n \zeta(1-n)z^n\]
    Then \begin{equation}\label{generating function of a}
        [f(z)]^m =\left(\frac{z}{2}+\zeta(-1)z^2+\zeta(-3)z^4+\zeta(-5)z^6+\cdots\right)^m=\sum_{n\geq m} a_n(m)z^n
    \end{equation}
and \[(\gamma+ f(x))^m=\sum_{n=0}^{\infty} H^m(1-n) x^n\]
\end{lemma}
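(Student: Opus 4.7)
The plan is to treat the two identities as two separate inductions/expansions, both of which essentially unpack the recursion (\ref{reduction of an}) and the expression (\ref{an}) that the paper has already established.

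First I would handle the representation of $f(z)$ itself. Using the classical identity $\zeta(1-n) = -B_n/n$ together with the vanishing of $B_n$ for odd $n \geq 3$, one sees immediately that the series $f(z)$ collapses to $\tfrac{z}{2} + \zeta(-1)z^2 + \zeta(-3)z^4 + \zeta(-5)z^6 + \cdots$, matching the displayed expression in (\ref{generating function of a}). This is just a bookkeeping step but it pins down the base case of the induction below.

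Next I would prove $[f(z)]^m = \sum_{n \geq m} a_n(m) z^n$ by induction on $m$. For the base case $m=1$, I would specialize the recursion (\ref{reduction of an}) at $\ell = 0$: since $a_j(0) = \delta_{j,0}$, the sum collapses to $a_n(1) = (-1)^n \zeta(1-n)$, which is exactly the coefficient of $z^n$ in $f(z)$. For the inductive step, assume $[f(z)]^\ell = \sum_{n \geq \ell} a_n(\ell) z^n$ and compute
\[
[f(z)]^{\ell+1} = f(z) \cdot [f(z)]^\ell = \sum_{n \geq \ell+1} \left( \sum_{k=1}^{n-\ell} (-1)^k \zeta(1-k)\, a_{n-k}(\ell) \right) z^n,
\]
where the upper limit $n-\ell$ comes from the fact that $a_{n-k}(\ell)$ vanishes when $n-k < \ell$. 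The inner sum is precisely the right-hand side of (\ref{reduction of an}), so the $z^n$ coefficient equals $a_n(\ell+1)$. The vanishing for $n<m$ follows automatically because each factor of $f$ contributes at least one power of $z$.

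For the second identity, I would apply the binomial theorem to $(\gamma + f(x))^m$, use the first identity on each power of $f$, and reindex:
\[
(\gamma + f(x))^m = \sum_{k=0}^{m} \binom{m}{k} \gamma^{m-k} [f(x)]^k = \sum_{n=0}^{\infty} x^n \sum_{k=0}^{\min(m,n)} \binom{m}{k} \gamma^{m-k} a_n(k).
\]
Comparing coefficient-by-coefficient with the formula (\ref{an}) for $H^m(1-n)$ finishes the proof: for $n=0$ only the $k=0$ term survives and gives $\gamma^m = H^m(1)$; for $n \geq 1$ the $k=0$ term drops out because $a_n(0)=0$, and the upper truncation at $\min(m,n)$ is immaterial since $\binom{m}{k}=0$ for $k>m$ and $a_n(k)=0$ for $k>n$. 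The only subtle point, and thus the main thing one must verify carefully, is that the two truncation conventions (the combinatorial one from $\binom{m}{k}$ and the analytic one from $a_n(k)=0$ for $k>n$) are compatible across all $(m,n)$, but this is just a routine bookkeeping check rather than a genuine obstacle.
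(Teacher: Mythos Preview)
Your proposal is correct and follows essentially the same route as the paper: the paper's one-line proof (``Repeatedly multiplying out the terms using the Cauchy product and considering the reduction formula in (\ref{reduction of an})'') is exactly the induction-via-Cauchy-product you spell out for the first identity, and your binomial expansion combined with (\ref{an}) is the natural reading of how the second identity follows. You have simply made explicit the bookkeeping (the collapse of $f(z)$ via $B_{2k+1}=0$, the base case $a_n(1)=(-1)^n\zeta(1-n)$, and the truncation compatibility) that the paper leaves to the reader.
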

\begin{proof}
Repeatedly multiplying out the terms using the Cauchy product and considering the reduction formula in (\ref{reduction of an}) gives the lemma. \end{proof}
\subsection{Higher order poles}\par
Let $K^m(n)$ and $L^m(n)$ denote the coefficient of $(s-n)^{-2}$ and $(s-n)^{-3}$ respectively of the Laurent expansion of $\mathcal{H}^m(s)$ at $s=n$. Then the coefficients could be written in the form $$\sum_{k=0}^m \binom{m}{k}\gamma^{-k}b_{n}(k)$$
and $b_n(k)$ could be written as $ka_n(k-1)$ and $k(k-1)a_n(k-2)$ respectively, here $a_n(k)$ is the same as in (\ref{an}). This is done the same in showing the reduction formula of $a_n$.

The following result exhibits a remarkable relation among coefficients.

\begin{theorem}\label{Laurent expansion of H}
    Let $\mathrm{H}^m(n,k)$ denote the $(s-n)^{-k}$ coefficient of the Laurent expansion of $\pHz{m}{s}$ expanded at $s=n$. Then
    \begin{equation}\label{coefficient of expansion}\mathrm{H}^m(1-n,k)=\sum_{\ell=0}^m \binom{m}{\ell}\gamma^{m-\ell} a_n(\ell,k)\end{equation}
    Where $a_n(\ell, k)$ is a triple sequence of rational numbers and takes the form $\ell(\ell-1)\cdots(\ell -k+2)a_n(\ell-k+1)$. Moreover, we have the formal power series \begin{equation}
        \sum_{n=0}^{\infty}\mathrm{H}^m(1-n,k+1)x^n=m(m-1)\cdots(m-k+1)(\gamma+f(x))^{m-k}
    \end{equation}
\end{theorem}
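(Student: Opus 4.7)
The plan is to extend the derivation of Lemma \ref{formal power series} from residues to the entire Laurent principal part, by establishing a functional identity relating $\pHz{m+1}{s}$ to $\pHz{m}{s}$, translating the induced coefficient recursion into a linear equation on generating functions, and solving it by induction on $m$. Repeating the manoeuvre used in going from (\ref{EM of Harmonic zeta function v1}) to (\ref{EM of H2 v1}), but now weighting by $H_n^m/n^s$, one obtains
$$\pHz{m+1}{s} = -\frac{d}{ds}\pHz{m}{s} + \gamma\,\pHz{m}{s} + \frac{1}{2}\pHz{m}{s+1} - \sum_{a=1}^{k}\frac{B_{2a}}{2a}\pHz{m}{s+2a} + R_{m,k}(s),$$
where $R_{m,k}$ is a holomorphic remainder whose domain can be pushed arbitrarily far to the left by taking $k$ large; meromorphic continuation extends the identity everywhere.

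Expanding both sides at a pole $s = 1-n$ and reading off the coefficient of $(s-(1-n))^{-j}$, using $-B_{2a}/(2a) = \zeta(1-2a)$ and writing $c_n^m(j) := \mathrm{H}^m(1-n,j)$, gives
$$c_n^{m+1}(j) = (j-1)\,c_n^m(j-1) + \gamma\, c_n^m(j) + \tfrac{1}{2}\,c_{n-1}^m(j) + \sum_{a\ge 1}\zeta(1-2a)\,c_{n-2a}^m(j),$$
with the convention $c_{\nu}^m(j) = 0$ for $\nu < 0$, reflecting the fact that $\pHz{m}{s}$ is holomorphic on $\Re(s) > 1$. Introducing $F_j^m(x) := \sum_{n \ge 0} c_n^m(j)\,x^n$, the shift $n \mapsto n-2a$ becomes multiplication by $x^{2a}$, so summing the recursion yields
$$F_j^{m+1}(x) = (j-1)F_{j-1}^{m}(x) + \Bigl(\gamma + \tfrac{x}{2} + \textstyle\sum_{a\ge 1}\zeta(1-2a)x^{2a}\Bigr)F_j^{m}(x) = (j-1)F_{j-1}^{m}(x) + (\gamma + f(x))F_j^{m}(x),$$
the last step being the closed form of $f$ recorded in (\ref{generating function of a}).

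I then induct on $m$ to prove $F_j^m(x) = m(m-1)\cdots(m-j+2)\,(\gamma + f(x))^{m-j+1}$ for every $j \ge 1$, understanding the empty product as $1$ when $j=1$ and noting that $F_j^m \equiv 0$ whenever $j > m+1$ by theorem \ref{Distribution of poles of H}. The base $j = 1$ is Lemma \ref{formal power series} itself. The inductive step substitutes the hypothesis into the functional equation above; the algebraic identity $(j-1)\,m^{\underline{j-2}} + m^{\underline{j-1}} = (m+1)\,m^{\underline{j-2}} = (m+1)^{\underline{j-1}}$, with $m^{\underline{r}}$ denoting the falling factorial, promotes the formula from $m$ to $m+1$. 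Setting $j = k+1$ produces the stated formal power series, and the refined identity $a_n(\ell,k) = \ell(\ell-1)\cdots(\ell-k+2)\,a_n(\ell-k+1)$ follows by expanding $(\gamma+f(x))^{m-k}$ via the binomial theorem, invoking $f(x)^{\ell} = \sum_{n \ge \ell} a_n(\ell)\,x^n$ from Lemma \ref{formal power series}, and matching $x^n$ coefficients against (\ref{coefficient of expansion}).

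The main delicate point is the coefficient extraction itself: for fixed $n$ one must verify that each shifted term $\pHz{m}{s+2a}$ contributes to the principal part at $s=1-n$ only when $1-n+2a$ is itself a pole of $\pHz{m}$, i.e., when $a \le n/2$, and that the holomorphic remainder $R_{m,k}$ contributes nothing to any negative-index coefficient once $k$ is chosen large enough relative to $n$. Both points follow from theorem \ref{Distribution of poles of H}, but their careful bookkeeping is what makes the apparently infinite sum $\sum_{a\ge 1}\zeta(1-2a)\,c_{n-2a}^m(j)$ collapse to the finite one that actually appears.
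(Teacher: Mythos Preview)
Your argument is correct and rests on the same functional identity and the same formal series $f$ as the paper, but the two proofs are organized around different inductions. The paper first posits the binomial ansatz (\ref{coefficient of expansion}), telescopes in $m$ to isolate a recursion for the triple sequence $a_n(\ell,k)$, and then inducts on the pole order $k$ via the auxiliary series $f_{k,\ell}$. You instead package the Laurent coefficients directly into $F_j^m(x)=\sum_n \mathrm{H}^m(1-n,j)x^n$, obtain the single recursion $F_j^{m+1}=(j-1)F_{j-1}^m+(\gamma+f)F_j^m$, and induct on $m$ using the falling-factorial identity; the $a_n(\ell,k)$ description is then read off at the end by binomial expansion. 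Your route is tighter---one recursion, one induction---while the paper's route makes the rational structure of $a_n(\ell,k)$ visible earlier in the argument.

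One expository wrinkle: you write ``The base $j=1$ is Lemma~\ref{formal power series}'', but since you are inducting on $m$, the base case is a value of $m$, not of $j$. The natural anchor is $m=0$, where $\pHz{0}{s}=\zeta(s)$ has only the simple pole at $s=1$, giving $F_1^0=1$ and $F_j^0=0$ for $j\ge 2$, in agreement with $0^{\underline{j-1}}(\gamma+f)^{1-j}$. Lemma~\ref{formal power series} is then the $j=1$ column of your conclusion (a useful consistency check) rather than the inductive base. With that clarification your proof is complete.
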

\begin{proof}
    We will use the symbol $\sum_{n=0}$ to denote the sum has an upper bound that is yet determined. Using (\ref{Asymp. Harmonic}) we get\begin{equation}\label{relating m+1 to m}
        \mathcal{H}^{m+1}(s)=-{\mathcal{H}^m}'(s)+\gamma \mathcal{H}^m (s)+\sum_{a=1}(-1)^a \zeta(1-a)\mathcal{H}^m(a+s)+\cdots
    \end{equation} Pulling out the terms gives
    \[\mathrm{H}^{m+1}(n,k)=(k-1)\mathrm{H}^m(n,k-1)+\gamma\mathrm{H}^m(n,k)+\sum_{a=1}(-1)^a \zeta(1-a)\mathrm{H}^m(n,k)\]
If we set $n\mapsto 1-n$ and write H in the form (\ref{coefficient of expansion}), which we know is true for $k=1,2,3$ and is easy to be seen true in general, we get
\[\begin{split}
   & \mathrm{H}^{m+1}(1-n,k)\gamma^{-m}- \mathrm{H}^m(1-n,k)\gamma^{1-m} \\ & =(k-1)\sum_{l=0}^{\infty}\binom{m}{l}\frac{1}{\gamma^l}a_n(l,k-1) +\sum_{a=1} (-1)^a\zeta(1-a)\sum_{l=0}^{\infty}\binom{m}{l}\frac{1}{\gamma^l}a_{n-a}(l,k)
\end{split}\]
summing both sides in $m$ gives a telescoping sum, which is simplified as
\[\mathrm{H}^m(n,k)\gamma^{-m}=\sum_{l=1}^{\infty}\binom{m}{l}\frac{1}{\gamma^{l}}\left\{(k-1)a_n(l-1,k-1)+\sum_{a=1}(-1)^a \zeta(1-a)a_{n-a}(l-1,k)\right\}\]
Equating coefficients gives
\[a_n(l,k)=(k-1)a_n(l-1,k-1)+\sum_{a=1}(-1)^a \zeta(1-a)a_{n-a}(l-1,k)\]
Consider the generating function of both sides. For the sake of induction, let's assume that $a_n(l,k-1)$ could be written in the form $l(l-1)\cdots(l-k+3)a_n(l-k+2)$, which we proved in the prior section for $k=2,3,4$. Set the formal power series $f_{k,l}(x)=a_1(l,k)x +a_2(l,k)x^2+a_3(l,k)x^3+\cdots$. Then we can write the above as 

$$f_{k,l}(x)=(k-1)f_{k-1,l-1}(x)+f(x)f_{k,l-1}(x)$$
I may omit the $x$ and write $f(x)$ as $f$ from now on. Using lemma \ref{formal power series}, we have $f_{k-1,l-1}=(l-1)\cdots(l-k+2)f^{l-k+1}$. Therefore

\[\begin{split}
    f_{k,l}-f\cdot f_{k,l-1}&=(k-1)(l-1)\cdots(l-k+2)f^{l-k+1}\\ 
\frac{f_{k,l}}{f^l}-\frac{f_{k,l-1}}{f^{l-1}}& =(k-1)f^{1-k}\cdot (l-1)\cdots (l-k+2)
\end{split}\]
Summing both sides gives
\[\frac{f_{k,m}}{f^m}-\frac{f_{k,0}}{f^0}=\frac{m(m-1)\cdots(m-k+2)}{f^{k-1}}\]
Note that $f_{k,0}=0$ for $k\geq 2$, Finally we multiply both sides by $f^m$ and equate coefficients to show induction.\end{proof}

A consequence of this is that, if the Euler-Mascheroni constant $\gamma$ is rational, then every coefficient $\text{H}^m(1-n,k)$ is a rational number. This could be a potential tool for proving the irrationality of $\gamma$.

\section{Further studies}First of all, it should be possible to find the constant terms of the Laurent expansion mentioned in theorem \ref{Laurent expansion of H}. For example, using the result \begin{equation}
    \sum_{n=1}^{\infty}\left(H_n^2-\left(\ln(n)+\gamma+\frac{1}{2n}\right)^2\right)=\frac{\ln^2(2\pi)}{2}-\gamma\ln(2\pi)-\frac{\gamma^2}{2}-2\gamma_1-1
\end{equation} given in this 
 \href{https://math.stackexchange.com/questions/2399259/a-closed-form-of-sum-n-1-infty-left-h-n2-left-ln-n-gamma-frac12n-r?rq=1}{math stack exchange post} \cite{2399259}, we can deduce the Laurent series of $\mathcal{H}^2(s)$ expanded at $s=0$ up to the constant term, i.e. \begin{equation}
     \mathcal{H}^2(s)=\frac{1}{s^2}+\frac{\gamma}{s}+\frac{\gamma^2}{2}-1+O(s)
 \end{equation} Secondly, different kinds of Dirichlet series can be studied using theorem \ref{Master theorem of Dirichlet series at negative integers}, as demonstrated. In fact, we have a more general result.
\begin{theorem} \label{Cauchy relation of Dirichlet series at negative integers} Set
$$A_k=\sum_{n\geq 1} a_n n^k x^n\quad ,\quad B_k=\sum_{n\geq 1} b_n n^k x^n\quad ,\quad C_k=\sum_{n\geq 1} c_n n^k x^n $$
Let $a_n=b_0c_n+b_1c_{n-1}+\cdots+b_n c_0$, then $$A_0=-a_0+b_0c_0+c_0B_0+b_0C_0+B_0C_0$$
$$A_k=c_0B_k+b_0C_k+\binom{k}{0}B_0C_k+\binom{k}{1}B_1C_{k-1}+\cdots+\binom{k}{k}B_kC_0$$
\end{theorem}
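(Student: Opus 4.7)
The plan is to interpret the three series as images of the full generating functions
$\alpha(x) = \sum_{n\geq 0} a_n x^n = a_0 + A_0$, $\beta(x) = b_0 + B_0$, $\gamma(x) = c_0 + C_0$ under the Euler operator $\Theta = x\tfrac{d}{dx}$, which sends $x^n$ to $n x^n$ and is a derivation on formal power series (because it is $x$ times an ordinary derivation). Under this identification, $\Theta^k\alpha = A_k$, $\Theta^k\beta = B_k$, $\Theta^k\gamma = C_k$ for every $k\geq 1$ (the $n=0$ term vanishes), whereas the $k=0$ operator leaves the constant terms $a_0,b_0,c_0$ attached. The convolution hypothesis $a_n = \sum_{j=0}^n b_j c_{n-j}$ is exactly the generating-function identity $\alpha(x) = \beta(x)\gamma(x)$.

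For $k=0$, I would simply expand $\alpha = \beta\gamma$ as $(b_0 + B_0)(c_0 + C_0) = b_0c_0 + b_0C_0 + c_0B_0 + B_0C_0$ and solve for $A_0 = \alpha - a_0$, which produces the first identity directly. For $k\geq 1$, the key step is to apply $\Theta^k$ to both sides of $\alpha = \beta\gamma$ and invoke the Leibniz rule for derivations:
$$\Theta^k(\beta\gamma) \;=\; \sum_{j=0}^k \binom{k}{j}(\Theta^j\beta)(\Theta^{k-j}\gamma).$$
The only terms on the right-hand side where a constant survives are $j=0$, contributing $(b_0 + B_0)C_k$, and $j=k$, contributing $B_k(c_0 + C_0)$; the middle indices $1\leq j\leq k-1$ yield pure products $B_jC_{k-j}$. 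Rearranging the pieces yields
$$A_k \;=\; b_0 C_k + c_0 B_k + \sum_{j=0}^{k}\binom{k}{j}B_jC_{k-j},$$
which is the stated identity.

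The entire argument is essentially bookkeeping; the only genuinely subtle point is tracking the constants $b_0$ and $c_0$, which sit outside $B_0$ and $C_0$ in the notation of the theorem. If one prefers to avoid formal differentiation entirely, an alternative route is purely combinatorial: substitute $n^k = \bigl(j+(n-j)\bigr)^k$ into the Cauchy formula for $a_n$, expand by the binomial theorem, and swap the order of summation. This produces the same decomposition with no appeal to the Leibniz rule and matches in form the earlier uses of Theorem \ref{Master theorem of Dirichlet series at negative integers} in the paper, so it may be the cleaner presentation to include.
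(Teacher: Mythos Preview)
Your proof is correct. The paper states this theorem without proof in the ``Further studies'' section, noting only that the choice $c_n\equiv 1$ recovers the earlier master theorem on Dirichlet series at negative integers, whose one-line proof is: substitute $x\mapsto xe^{i\theta}$ into the generating-function identity and expand in $\theta$. Your Euler-operator approach is this same technique in differential dress, since applying $\Theta^k$ with $\Theta=x\,d/dx$ is equivalent to extracting the $(i\theta)^k/k!$ coefficient after the substitution $x\mapsto xe^{i\theta}$, and the Leibniz rule for the derivation $\Theta$ is precisely the Cauchy product in the $\theta$-variable. Your version has the minor advantage of staying real throughout and of tracking the constants $a_0,b_0,c_0$ explicitly, which the paper's sketch never addresses; the combinatorial alternative you mention---expand $n^k=\bigl(j+(n-j)\bigr)^k$ binomially inside the convolution and swap sums---is equally valid and is arguably the most elementary packaging of the three.
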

setting $c_k=1$ gives theorem \ref{Master theorem of Dirichlet series at negative integers}. Theorem \ref{Cauchy relation of Dirichlet series at negative integers} was not mentioned in our study because the author has not found a chance to use this in the paper. Theorem \ref{Cauchy relation of Dirichlet series at negative integers} can be used to find number theoretic results, one of them are formulas regarding $c_m(n)$ in (\ref{cm formula}). For example: 
$$c_{n-1}(1)+c_{n-2}(2)+c_{n-3}(3)+\cdots+c_1(n-1)=\sigma_0(1)+\sigma_0(2)+\sigma_0(3)+\cdots+\sigma_0(n)-n$$
Thirdly, different kinds of harmonic zeta functions can be studied with the aid of the tools used in this paper. For example, the skewed harmonic numbers are defined as $\overline{H}_n=1-\frac{1}{2}+\frac{1}{3}-\cdots+\frac{(-1)^{n+1}}{n}$. Studies concerning the Dirichlet series generated by $\overline{H}_n$ are studied by \cite{alzer2020four}, \cite{boyadzhiev2009alternating} and \cite{flajolet1998euler}. For another example, the negative values of the multiple zeta values can be studied by repeatedly applying the technique in theorem \ref{Master theorem of Dirichlet series at negative integers}. Since the analytical properties are very similar, its Laurent series coefficients may be of similar form as (\ref{coefficient of expansion}).

\bibliographystyle{plain}
\bibliography{bibliography.bib}

\begin{thebibliography}{10}

\bibitem{alzer2020four}
Horst Alzer and Junesang Choi.
\newblock Four parametric linear euler sums.
\newblock {\em Journal of Mathematical Analysis and Applications}, 484(1):123661, 2020.

\bibitem{apostol1984dirichlet}
Tom~M Apostol and Thiennu~H Vu.
\newblock Dirichlet series related to the riemann zeta function.
\newblock {\em Journal of Number Theory}, 19(1):85--102, 1984.

\bibitem{bailey1994experimental}
David~H Bailey, Jonathan~M Borwein, and Roland Girgensohn.
\newblock Experimental evaluation of euler sums.
\newblock {\em Experimental Mathematics}, 3(1):17--30, 1994.

\bibitem{boyadzhiev2009alternating}
Khristo Boyadzhiev, H~Gopalkrishna Gadiyar, and R~Padma.
\newblock Alternating euler sums at the negative integers.
\newblock {\em Hardy-Ramanujan Journal}, 32, 2009.

\bibitem{candelpergher2020laurent}
Bernard Candelpergher and Marc-Antoine Coppo.
\newblock Laurent expansion of harmonic zeta functions.
\newblock {\em Journal of Mathematical Analysis and Applications}, 491(1):124309, 2020.

\bibitem{flajolet1998euler}
Philippe Flajolet and Bruno Salvy.
\newblock Euler sums and contour integral representations.
\newblock {\em Experimental Mathematics}, 7(1):15--35, 1998.

\bibitem{4956908}
Dqrksun (https://math.stackexchange.com/users/1018345/dqrksun).
\newblock A closed form of the family of series $\sum _{k=1}^{\infty } \frac{\left(h_k\right){}^m-(\log (k)+\gamma )^m}{k}$ for $m\ge 1$.
\newblock Mathematics Stack Exchange.
\newblock URL:https://math.stackexchange.com/q/4956908 (version: 2024-09-22).

\bibitem{4972514}
Dqrksun (https://math.stackexchange.com/users/1018345/dqrksun).
\newblock A closed form of the family of series $\sum _{k=1}^{\infty } \frac{\left(h_k\right){}^m-(\log (k)+\gamma )^m}{k}$ for $m\ge 1$.
\newblock Mathematics Stack Exchange.
\newblock URL:https://math.stackexchange.com/q/4972514 (version: 2024-09-22).

\bibitem{2399259}
Olivier~Oloa (https://math.stackexchange.com/users/118798/olivier oloa).
\newblock A closed form of $\sum_{n=1}^\infty\left[ h_n^2-\left(\ln n+\gamma+\frac1{2n} \right)^2\right]$.
\newblock Mathematics Stack Exchange.
\newblock URL:https://math.stackexchange.com/q/2399259 (version: 2017-08-21).

\bibitem{lehmer1988new}
D.H. Lehmer.
\newblock A new approach to bernoulli polynomials.
\newblock {\em The American Mathematical Monthly}, 95(10):905--911, 1988.

\bibitem{matsuoka1982values}
Y.~Matsuoka.
\newblock On the values of a certain dirichlet series at rational integers.
\newblock {\em Tokyo Journal of Mathematics}, 5(2):399--403, 1982.

\end{thebibliography}

\end{document}